\documentclass[11pt]{amsart}
\usepackage{times,amsmath}
\usepackage{amsthm}
\usepackage{amsmath,algorithm2e,bm}
\usepackage{amsmath,amssymb,amsfonts,amscd}
\usepackage{graphicx,latexsym,color}
\usepackage{hyperref}
\usepackage{multirow}
\usepackage{floatrow}
\usepackage{wrapfig}
\newtheorem{theorem}{Theorem}

\usepackage{verbatim}
\bibliographystyle{plain}

\usepackage{subfig}

\newcommand{\p}{\partial}
\newcommand{\dif}{\,\mathrm{d}}

% setup the page for 1 inch margins all around
\topmargin -0.5in \textwidth 6.5in \textheight 9in \oddsidemargin 0in \evensidemargin 0in

\newtheorem{lem}{Lemma}[section]
\def \bes{\begin{eqnarray}}
\def \ees{\end{eqnarray}}
\def \bns{\begin{eqnarray*}}
\def \ens{\end{eqnarray*}}

\newenvironment{eqa}{\begin{equation}%
  \begin{array}{rcl}}{\end{array}\end{equation}}
\newcommand\beqa{\begin{eqa}}
\newcommand\eeqa{\end{eqa}}
\parindent=2em
\parskip=0.05cm
\newcommand{\re}[1]{\mbox{$($\ref{#1}$)$}}\baselineskip 1pt

\usepackage[bottom=1.2in,top=1in,left=1in,right=1in]{geometry}

\setlength{\footskip}{15pt}
\begin{document}
\bibliographystyle{plain}

\title{Bifurcation Analysis Reveals Solution Structures of Phase Field Models}
\author{Xinyue Evelyn Zhao}  
\address{Department of Mathematics, \\
Vanderbilt University, Nashville, TN 37212, USA} 
\email{xinyue.zhao@vanderbilt.edu}

%    author two information
\author{Long-Qing Chen}
\address{ 
Department of Materials Science and Engineering, Pennsylvania State University, University Park, PA, 16802, USA, corresponding author}
\email{lqc3@psu.edu}

\author{Wenrui Hao$^*$}
\address{Department of Mathematics, Pennsylvania State University, University Park, PA, 16802, USA, corresponding author}
\email{wxh64@psu.edu}

\author{Yanxiang Zhao}
\address{Department of Mathematics, The George Washington University, Washington, D.C. 20052, USA}
\email{yxzhao@gwu.edu}

\begin{abstract}
    Phase field method is playing an increasingly important role in understanding and predicting morphological evolution in materials and biological systems. Here, we develop a new analytical approach based on bifurcation analysis to explore the mathematical solution structure of phase field models. Revealing such solution structures not only is of great mathematical interest but also may provide guidance to experimentally or computationally uncover new morphological evolution phenomena in materials undergoing electronic and structural phase transitions. To elucidate the idea, we apply this analytical approach to three representative phase field equations: Allen-Cahn equation, Cahn-Hilliard equation, and Allen-Cahn-Ohta-Kawasaki system. The solution structures of these three phase field equations are also verified numerically  by the homotopy continuation method.
\end{abstract}

\maketitle

\section{Introduction}

Phase field approach is an important modeling tool for modeling interfacial evolution problems in materials science and biological systems. It is rooted in the diffuse-interface description of interfaces for fluid interfaces proposed by van der Waals more than a century ago \cite{vanderWaals_1893,Rowlinson_1979}. The early applications of the diffuse-interface to superconducting phase transitions and the compositional clustering and ordering in alloys led to the establishment of well-known time-dependent Ginzburg-Landau (TDGL) equations \cite{Ginzburg_Landau_JETP1950,Stephen_Suhl_1964}, the Cahn-Hilliard \cite{CahnHilliard_JCP1958,Cahn_1961}, and Allen-Cahn equations  \cite{AllenCahn_JP1977}  which form the basis for the evolution equations in the phase-field method. The term of ``phase-field" was coined in early applications of diffuse-interface description to solidification and dendrite growth \cite{BrowerKesslerKoplikLevine_PRA1984,Fix_1983,KesslerKoplikLevine_PRA1984,KesslerKoplikLevine_PRA1985,Boettinger_Warren_Beckerman_Karma_2002}. The generalization of the phase-field to include both physical and artificial fields to distinguish different phases has led to wide-spread applications of the phase-field method to modeling morphological and microstructure evolution in a wide variety processes beyond solidification in materials science \cite{Chen_Zhao_2022,Chen_ARMR2002,Steinbach_2013}, biology \cite{Aranson2016},fluid and solid mechanics \cite{AndersonMcFaddenWheeler_ARFM1998,Borden_CMAME2012}, etc.
%\cite{ZhangRenSamantaDu_NPJ2016,OhtaKawasaki_Macromolecules1986}, biophysics %\cite{DuLiuWang_JCP2004, DuLiuWang_JCP2006, Camley_PRL2013,NajemGrant_PRE2016}, . For %comprehensive review on phase field modeling, analysis and computation, we refer the %interested readers to  by L. Chen and more recently \cite{DuFeng_HNA2020} by Q. Du and X. %Feng.

In the phase-field method, one introduces a labeling function, called phase-field, $\phi$ which for a two-phase system, is assigned a value (say, -1) for one phase, and another value (say, +1) for the other. In the interfacial region, the phase field labeling function $\phi$ rapidly but smoothly transitions from -1 to 1. Meanwhile, the interface is tracked by a level set, typically the 0-level set, during the morphological evolution. The main advantage of the phase field approach is that it can predict the evolution of arbitrary morphologies and complex microstructures without explicitly tracking interfaces, and thereby easily handle topological changes of interfaces.

Since phase-field method involves the numerical solutions to systems of partial differential equations in time and space, there have been extensive efforts in developing numerical methods for solving the phase-field equations. For example, several classic methods such as explicit/implicit Euler methods, Crank-Nicolson and its variant, linear multistep methods, and Runge-Kutta methods have been considered for the time discretization (See for example \cite{ChenShen_CPC1998,LiuShen_PhysicaD,DuNicolaides_SINU1991,Akrivis_MCAMS1998,SongShu_ISC2017} and the references cited therein). For the spatial discretization, methods such as finite difference, finite element, discontinuous Galerkin, and spectral approximation are typical examples {See the recent review article \cite{DuFeng_HNA2020} and the reference cited therein for more detailed discussion). Some stabilized schemes have recently been developed based on the inheriting the energy dissipation law and phase field gradient flow dynamics, including the convex splitting \cite{Eyre_MRS1998}, linearly-implicit stabilized schemes \cite{XuZhao_JSC2019, Xu_CMAME2019}, exponential integrator \cite{CoxMatthews_JCP2002,DuJuLiQiao_SIAMReview2021}, Invariant energy quadratization \cite{Yang_JCP2016}, and scalar auxiliary variable schemes \cite{ShenXuYang_JCP2018}. 

However, there are very limited studies to reveal the solution structure of phase field models which is critical to understanding the models from a mathematical point of view and to exploring the possible formation of novel morphological patterns. Solution structures of nonlinear differential equations have been well-studied by exploring bifurcations \cite{Rhein1} and multiple solutions \cite{BKHR}. Existing theories and numerical
methods have contributed to a better understanding of these solution structures and the relationship between solutions and parameters \cite{haber1}. For example, the Crandall-Rabinowitz theorem has been used to theoretically study the bifurcations  of nonlinear differential equations such as free boundary problems \cite{FRF1,FHB,FHB1,zhao2020symmetry,zhao2021bifurcation}.
Numerically, homotopy continuation method \cite{hao2020adaptive,MorganSommese1} has been successfully employed to study parametric problems such as bifurcation
\cite{Rhein1} and the structural stability
\cite{Rhein2}. Recently, several numerical
methods have been developed based on homotopy continuation methods for computing multiple solutions, steady-states, and bifurcation points of nonlinear PDEs \cite{HHHLSZ,HHHS}. These numerical methods have been also applied to hyperbolic conservation laws \cite{HHSSXZ}, physical systems \cite{HNS,HNS1} and some more complex free boundary problems arising from biology \cite{HCF,HF}. 

In this paper, we develop an analytical framework to study the solution structure of phase field models and apply it to three well-known phase field equations. In particular, we study the solution structure of Allen-Cahn equation in Section 3 and that of Cahn-Hillard equation in Section 4. Finally in Section 5, we discuss the solution structure of Allen-Cahn-Ohta-Kawazaki system which is used to model the morphology of diblock copolymer systems.

\section{Bifurcation analysis and homotopy tracking}

The goal of this paper is to compute global bifurcation diagram for various PDE models. Our approach combine analytical and numerical method. First, we analyze the bifurcations of various phase field models from the trivial steady-states by Crandall-Rabinowitz theorem, then we numerically compute the global bifurcation diagram via homotopy tracking. Generally speaking, we consider the following nonlinear operator 
\[\mathcal{F}(x,\mu)=0,\]
where $\mathcal{F}(\cdot,\mu)$ is a  $C^p$ map, $p\ge 1$ from a real Banach space  $X$ to another real Banach space $Y$ and $\mu\in \mathbb{R}$ is a parameter. The bifurcation of $x$ with respect to the parameter $\mu$ can be verified theoretically by  Crandall-Rabinowitz theorem \cite{crandall1971bifurcation}. 

\begin{theorem}\label{CRthm}  {\bf (Crandall-Rabinowitz theorem, \cite{crandall1971bifurcation})}
Let $X$, $Y$ be real Banach spaces and $\mathcal{F}(\cdot,\cdot)$ be a $C^p$ map of a neighborhood $(0,\mu_0)$ in $X \times \mathbb{R}$ into $Y$. Denote by $D_x \mathcal{F}$ and $D_{\mu x}\mathcal{F}$ the first- and second-order Fr\'echet derivatives, respectively. 
Assume the following four conditions hold:
\begin{itemize}
\item[(I)] $\mathcal{F}(0,\mu) = 0$ for all $\mu$ in a neighborhood of $\mu_0$,
\item[(II)] $\mathrm{Ker} \,D_x\mathcal{F}(0,\mu_0)$ is one dimensional space, spanned by $x_0$, 
\item[(III)] $\mathrm{Im} \,D_x \mathcal{F}(0,\mu_0)=Y_1$ has codimension 1,
\item[(IV)] $D_{\mu x} \mathcal{F}(0,\mu_0) x_0 \notin Y_1$,
\end{itemize}
then $(0,\mu_0)$ is a bifurcation point of the equation $\mathcal{F}(x,\mu)=0$ in the following sense: in a neighborhood of $(0,\mu_0)$, the set of solutions $\mathcal{F}(x,\mu) =0$ consists of two $C^{p-2}$ smooth curves, $\Gamma_1$ and $\Gamma_2$, which intersect only at the point $(0,\mu_0)$; $\Gamma_1$ is the curve $(0,\mu)$, and $\Gamma_2$ can be parameterized as follows:
$$\Gamma_2: (x(\epsilon),\mu(\epsilon)), |\epsilon| \text{ small, } (x(0),\mu(0))=(0,\mu_0),\; x'(0)=x_0.$$
\end{theorem}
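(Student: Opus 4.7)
The plan is to reduce the abstract bifurcation equation to a scalar equation by the Lyapunov--Schmidt procedure and then apply the implicit function theorem. Using (II) and (III), I would fix closed complements so that $X=\mathrm{span}\{x_0\}\oplus X_1$ and $Y=Y_1\oplus Z$ with $\dim Z=1$, together with the continuous projections $P:Y\to Y_1$ and $Q=I-P$. Writing every nearby element as $x=\epsilon x_0+z$ with $z\in X_1$, the equation $\mathcal{F}(x,\mu)=0$ splits into
\begin{equation*}
P\mathcal{F}(\epsilon x_0+z,\mu)=0,\qquad Q\mathcal{F}(\epsilon x_0+z,\mu)=0.
\end{equation*}

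The first (auxiliary) equation is solved by the implicit function theorem: its derivative with respect to $z$ at $(\epsilon,z,\mu)=(0,0,\mu_0)$ equals the restriction of $PD_x\mathcal{F}(0,\mu_0)$ to $X_1$, which is a topological isomorphism onto $Y_1$ because $\ker D_x\mathcal{F}(0,\mu_0)=\mathrm{span}\{x_0\}$ and $\mathrm{Im}\,D_x\mathcal{F}(0,\mu_0)=Y_1$. This produces a $C^p$ map $z=z(\epsilon,\mu)$ with $z(0,\mu_0)=0$. Condition (I) and uniqueness in the implicit function theorem actually force $z(0,\mu)\equiv 0$ near $\mu_0$, hence $\partial_\mu z(0,\mu_0)=0$; differentiating the auxiliary equation in $\epsilon$ and using $D_x\mathcal{F}(0,\mu_0)x_0=0$ together with the injectivity of the restricted operator yields $\partial_\epsilon z(0,\mu_0)=0$ as well.

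Substituting $z(\epsilon,\mu)$ into the second (bifurcation) equation produces the scalar problem $g(\epsilon,\mu):=Q\mathcal{F}(\epsilon x_0+z(\epsilon,\mu),\mu)=0$. By (I), $g(0,\mu)=0$, so I can write $g(\epsilon,\mu)=\epsilon h(\epsilon,\mu)$ and seek zeros of $h$. A direct computation of $h(0,\mu_0)=\partial_\epsilon g(0,\mu_0)$, using $\partial_\epsilon z(0,\mu_0)=0$ and $D_x\mathcal{F}(0,\mu_0)x_0=0$, gives $h(0,\mu_0)=0$. Differentiating once more and discarding terms that vanish on account of $\partial_\mu z(0,\mu_0)=0$ and $\mathrm{Im}\,D_x\mathcal{F}(0,\mu_0)\subset Y_1$ reduces the computation to
\begin{equation*}
\partial_\mu h(0,\mu_0)=QD_{\mu x}\mathcal{F}(0,\mu_0)x_0,
\end{equation*}
which is precisely the transversality expression appearing in (IV) and is therefore nonzero. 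A second application of the implicit function theorem then solves $h(\epsilon,\mu)=0$ uniquely as $\mu=\mu(\epsilon)$ with $\mu(0)=\mu_0$, producing $\Gamma_2$; the tangent identity $x'(0)=x_0+\partial_\epsilon z(0,\mu_0)+\partial_\mu z(0,\mu_0)\mu'(0)=x_0$ then follows from the two vanishing derivatives already established.

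The step I expect to be the main obstacle is the derivative computation for $\partial_\mu h(0,\mu_0)$: the expansion of $\partial_\mu\partial_\epsilon g$ produces several mixed terms involving $D_{xx}\mathcal{F}$ and $D_x\mathcal{F}$ applied to $\partial_\epsilon z$, $\partial_\mu z$, and $\partial_{\mu\epsilon}z$, and showing that \emph{all} of them die after applying $Q$ requires carefully using each of $\partial_\epsilon z(0,\mu_0)=0$, $\partial_\mu z(0,\mu_0)=0$, and $QD_x\mathcal{F}(0,\mu_0)=0$ in turn. The advertised $C^{p-2}$ regularity of $\Gamma_2$ is then a bookkeeping consequence of the two places where smoothness is lost: factoring out $\epsilon$ costs one derivative, and the final implicit function theorem costs another.
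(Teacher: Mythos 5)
The paper does not prove this statement: it is quoted verbatim as the classical Crandall--Rabinowitz theorem and attributed to \cite{crandall1971bifurcation}, so there is no in-paper proof to compare against. Your Lyapunov--Schmidt argument is the standard proof of that theorem and is essentially correct: the splitting $X=\mathrm{span}\{x_0\}\oplus X_1$, $Y=Y_1\oplus Z$, the first implicit-function-theorem step giving $z(\epsilon,\mu)$ with $z(0,\mu)\equiv 0$ and $\partial_\epsilon z(0,\mu_0)=0$, the factorization $g=\epsilon h$, and the identification $\partial_\mu h(0,\mu_0)=QD_{\mu x}\mathcal{F}(0,\mu_0)x_0\neq 0$ via (IV) are all the right moves, and the uniqueness in the first reduction is what guarantees that $\Gamma_1\cup\Gamma_2$ exhausts the solution set near $(0,\mu_0)$ (worth stating explicitly, since the theorem claims the solution set \emph{consists of} these two curves). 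One small bookkeeping quibble: the second application of the implicit function theorem does not itself cost a derivative --- if $h$ is $C^{p-1}$ the implicit function $\mu(\epsilon)$ is again $C^{p-1}$ --- so your argument actually delivers $C^{p-1}$ curves; the $C^{p-2}$ in the statement is simply a weaker claim and is therefore still covered.
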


Although the bifurcation theory can help in some
special cases, the in-depth study of solution structures often requires numerical methods to derive bifurcation diagrams of nonlinear systems. Generally speaking, the  nonlinear operator $\mathcal{F}$ is approximated by $\mathcal{F}^h$ numerically ($h$ refers to the mesh size of numerical discretization). Then the numerical solution $x^h$ is computed by solving the following discretized nonlinear system:  

\begin{equation}\label{Sys}
	\mathcal{F}^h({x}^h,\mu)=\mathbf{0},
\end{equation}
where $\mathcal{F}^h:
\mathbb{R}^n\times\mathbb{R}\rightarrow\mathbb{R}^n$
and  $x^h$ is the variable vector that depends on the parameter $\mu$, i.e., $x^h=x^h(\mu)$. Suppose we have a solution at the starting point, namely $x^h(\mu_0)=x_0$, various homotopy tracking algorithms can be used to compute the solution path \cite{hao2021adaptive,hao2020adaptive,hao2021stochastic}, $x^h(\mu)$. If $\p_x\mathcal{F}^{h}(x^h,\mu)$ is nonsingular, the solution path $x^h(\mu)$ is smooth and unique. However, when $\p_x\mathcal{F}^{h}(x^h,\mu)$ becomes singular, the solution path hits the singularity and different types of bifurcations are formed \cite{hao2020spatial}. 

More specifically, the homotopy tracking algorithm consists of a predictor step and a corrector step to solve the parametric problem.
 The predictor is to compute the solution at
	$\mu_1=\mu_0+\Delta \mu$ by setting \[\mathcal{F}^h(x_0+\Delta x^h,\mu_0+\Delta
	\mu)=0,\] which, at the first order, yields an Euler predictor,\[
	\p_{x}\mathcal{F}^{h}(x_0,\mu_0)\Delta x^h=-\p_{\mu}\mathcal{F}^{h}(x_0,\mu_0)\Delta 
	\mu.
	\]
Then we apply
 Newton corrector to refine the solution with an initial guess $\widetilde{x}^h=x_0+\Delta x^h$:
\[	\p_{x}\mathcal{F}^{h}(\widetilde{x}^h,\mu_1)\Delta
	x^h=-\mathcal{F}^{h}(\widetilde{x}^h,\mu_1),
\] 
and repeat $\widetilde{x}^h=\widetilde{x}^h+\Delta x^h$ until $(\widetilde{x}^h,\mu_1)$ is on the path, namely, $\mathcal{F}^h(\widetilde{x}^h,\mu_1)=0$.

\section{Bifurcation analysis of Allen-Cahn equation}

In this and the following sections, we will consider two classical phase field equations: Allen-Cahn equation and Cahn-Hilliard equation. As a mathematical convention, we take $\phi = \pm1$ in two distinct phases, respectively. This is in contrast to the Allen-Cahn-Ohta-Kawasaki equation in Section 5, in which we rather take $\phi = 0$ or $1$ in the two phases from physical perspective.

We consider Allen-Cahn equation
\bes \label{ACeqn}
\frac{\partial \phi}{\partial t}(\mathbf{x}, t)=\epsilon \Delta \phi(\mathbf{x}, t)- \frac{1}{\epsilon}W^{\prime}(\phi(\mathbf{x}, t)), \quad \mathbf{x} \in \Omega, t>0.
\ees
Here $\Omega = [-1, 1]^d, d =1, 2, 3$, and $0<\epsilon \ll 1$ is a parameter to control the width of the interface. $\phi$ is a phase field labeling function which equals $\pm1$ in two distinct phases. The function $W(\phi) = \frac{1}{4}(\phi^2-1)^2$ is a double well potential which enforces the phase field function $\phi$ to be equal to 1 inside the interface and -1 outside the interface. The Allen-Cahn equation (\ref{ACeqn}) can be viewed as the $L^2$ gradient flow dynamics for the Ginzburg-Landau free energy functional
\begin{align}\label{eqn:GL}
    E(\phi) = \int_{\Omega} \frac{\epsilon}{2}|\nabla\phi|^2 + \frac{1}{\epsilon}W(\phi)\ \text{d}\mathbf{x}.
\end{align}
In 1D case, Ginzburg-Landau free energy reduces to
\bes
\int_{-1}^1 \frac{\epsilon}{2}(\phi_x)^2+\frac{1}{4\epsilon}\big(\phi^2-1\big)^2\dif x,
\ees
and the associated Euler-Lagrange equation (steady-state Allen-Cahn equation) becomes
\begin{equation}\label{eqn:steadyAllenCahn}
\begin{cases}
&-\epsilon\phi_{xx} + \frac{1}{\epsilon} (\phi^3 - \phi) = 0 \hspace{2em} -1<x<1,\\
&\phi_x(-1) = \phi_x(1) = 0.
\end{cases}
\end{equation}

\subsection{Bifurcation analysis}
%The steady states of equation \re{ACeqn} are solutions of the following equation:
It is easy to verify that $\phi\equiv\phi_0= -1,0,1$ are three trivial solutions of the steady-state system (\ref{eqn:steadyAllenCahn}). Besides these trivial solutions, we are more interested in non-trivial steady states, which can bifurcate from the zero trivial steady-state. More specifically, we consider the following shifted system from $\phi_0$:
%To apply the theorem on any trivial steady states of the system \re{ss}, we need to shift our solution by $-\phi_0$, where $\phi_0$ can be -1, 0, or 1. Hence, the new system is
\begin{equation}\label{eqn:shiftedSteadyAllenCahn}
    \begin{cases}
        &-\phi_{xx} + \frac{1}{\epsilon^2}[(\phi+\phi_0)^3 - (\phi+\phi_0)] = 0 \hspace{2em} -1<x<1,\\
        &\phi_x(-1)=\phi_x(1)=0.
    \end{cases}
\end{equation}
We can verify that $\phi=0$ is always a solution to the system \re{eqn:shiftedSteadyAllenCahn}.

Next, we consider the following Banach space:
\begin{equation}\label{Banachsp}
    X^{l+\alpha} = \{\phi(x)\in C^{l+\alpha}[-1,1], \phi_x(-1)=\phi_x(1)=0\},
\end{equation}
with the H\"{o}lder norm
\begin{equation*}
    \|u\|_{X^{l+\alpha}} = \|u\|_{C^l([-1,1])} + \max\limits_{|\beta|=l} |D^\beta u|_{C^\alpha([-1,1])},
\end{equation*}
where $l\ge0$ is an integer, $0<\alpha <1$, and 
\begin{equation*}
    |u|_{C^\alpha([-1,1])} = \sup \limits_{x\neq y \,\in(-1,1)} \frac{|u(x)-u(y)|}{|x-y|^\alpha}. 
\end{equation*}
Taking
\begin{equation}
    \label{spaces}
    X=X^{l+2+\alpha} \hspace{2em}\text{and}\hspace{2em} Y=X^{l+\alpha}
\end{equation}
in Crandall-Rabinowitz Theorem (Theorem \ref{CRthm}), and defining an operator $\mathcal{F}$ as
\begin{equation}
    \label{operatorF}
    \mathcal{F}(\phi,\epsilon) = -\phi_{xx} + \frac{1}{\epsilon^2} [(\phi+\phi_0)^3 - (\phi+\phi_0)],
\end{equation}
where $\epsilon$ is the bifurcation parameter, we know that $\mathcal{F}(\cdot, \epsilon)$ maps $X$ into $Y$. 

Since $\phi=0$ is always a solution to the system (\ref{eqn:shiftedSteadyAllenCahn}), $\mathcal{F}(0,\epsilon)=0$ for every $\epsilon$, and Condition I of Crandall-Rabinowitz Theorem (Theorem \ref{CRthm}) is satisfied. To verify other conditions, we need to compute the Fr\'echet derivative of the operator $\mathcal{F}$, which is given in the following lemma.

\begin{lem}\label{lem1.1}
The Fr\'echet derivative $D_\phi \mathcal{F}(\phi,\epsilon)$ of the operator $\mathcal{F}$ is given by
\begin{equation}\label{freD}
    D_\phi \mathcal{F}(\phi,\epsilon)[\xi] =-\xi_{xx} + \frac{1}{\epsilon^2}[3 (\phi+\phi_0)^2 \xi - \xi].
\end{equation}
\end{lem}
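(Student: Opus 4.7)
The plan is to compute the Fréchet derivative directly from the definition, by expanding $\mathcal{F}(\phi+\xi,\epsilon)-\mathcal{F}(\phi,\epsilon)$ and isolating the terms that are linear in $\xi$. The operator $\mathcal{F}$ splits naturally into a linear piece $-\phi_{xx}$ and a nonlinear piece $\frac{1}{\epsilon^2}[(\phi+\phi_0)^3-(\phi+\phi_0)]$, so the work reduces to handling the cubic term.

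First, I would handle the easy contributions. The map $\phi\mapsto -\phi_{xx}$ is already linear and bounded from $X=X^{l+2+\alpha}$ to $Y=X^{l+\alpha}$, so it is its own Fréchet derivative and contributes $-\xi_{xx}$. Likewise the linear term $-\frac{1}{\epsilon^2}(\phi+\phi_0)$ contributes $-\frac{1}{\epsilon^2}\xi$.

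Next, I would expand the cubic term using the binomial identity
\begin{equation*}
(\phi+\phi_0+\xi)^3 - (\phi+\phi_0)^3 = 3(\phi+\phi_0)^2\xi + 3(\phi+\phi_0)\xi^2 + \xi^3.
\end{equation*}
Dividing by $\epsilon^2$ and combining with the $-\xi/\epsilon^2$ piece gives
\begin{equation*}
\mathcal{F}(\phi+\xi,\epsilon)-\mathcal{F}(\phi,\epsilon) = \Bigl(-\xi_{xx} + \tfrac{1}{\epsilon^2}\bigl[3(\phi+\phi_0)^2\xi - \xi\bigr]\Bigr) + \tfrac{1}{\epsilon^2}\bigl[3(\phi+\phi_0)\xi^2 + \xi^3\bigr].
\end{equation*}
The first parenthesis is exactly the candidate derivative $D_\phi\mathcal{F}(\phi,\epsilon)[\xi]$ in \re{freD}, and it is plainly linear in $\xi$ and bounded from $X$ to $Y$ because multiplication by the fixed $C^{l+\alpha}$ function $3(\phi+\phi_0)^2-1$ is a bounded operator on the Hölder space and $\p_{xx}:X^{l+2+\alpha}\to X^{l+\alpha}$ is bounded.

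The only substantive step is verifying that the remainder $R(\xi):=\tfrac{1}{\epsilon^2}[3(\phi+\phi_0)\xi^2+\xi^3]$ is $o(\|\xi\|_X)$ as $\|\xi\|_X\to 0$. I would use the fact that $X^{l+\alpha}$ is a Banach algebra under pointwise multiplication (with the embedding $X^{l+2+\alpha}\hookrightarrow X^{l+\alpha}$ continuous), so
\begin{equation*}
\|R(\xi)\|_Y \le \tfrac{C}{\epsilon^2}\bigl(\|\phi+\phi_0\|_Y\|\xi\|_X^2 + \|\xi\|_X^3\bigr),
\end{equation*}
which is $O(\|\xi\|_X^2)$. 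I expect this norm bookkeeping to be the only non-trivial step; the algebraic expansion is immediate. This confirms that \re{freD} is indeed the Fréchet derivative and completes the proof.
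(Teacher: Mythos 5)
Your proposal is correct and follows essentially the same route as the paper: expand the cubic, identify the linear part as the candidate derivative, and observe that the remainder $\tfrac{1}{\epsilon^2}\xi^2[3(\phi+\phi_0)+\xi]$ is $O(\|\xi\|_X^2)$, hence $o(\|\xi\|_X)$. Your explicit remarks on the boundedness of the linear candidate and the Banach-algebra property of the H\"older norm are slightly more careful than the paper's one-line estimate, but the argument is the same.
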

\begin{proof}
By taking $\phi,\xi\in X$,  we have
\begin{equation*}
    \begin{split}
        &\mathcal{F}(\phi+\xi,\epsilon)-\mathcal{F}(\phi,\epsilon) - D_\phi \mathcal{F}(\phi,\epsilon)[\xi]\\ 
        &= \frac{1}{\epsilon^2}[(\phi+\phi_0+\xi)^3 - (\phi+\phi_0+\xi) - (\phi+\phi_0)^3 + (\phi+\phi_0) - 3(\phi+\phi_0)^2 \xi + \xi]\\
        &= \frac{1}{\epsilon^2} \xi^2[\xi+3(\phi+\phi_0)].
    \end{split}
\end{equation*}
Therefore,
\begin{equation*}
\begin{split}
    \frac{\|\mathcal{F}(\phi+\xi,\epsilon)-\mathcal{F}(\phi,\epsilon) - D_\phi \mathcal{F}(\phi,\epsilon)\xi\|_Y}{\|\xi\|_X} &= \frac{\|\frac{1}{\epsilon^2} \xi^2[\xi+3(\phi+\phi_0)]\|_Y}{\|\xi\|_X}\\
    &\le \frac{1}{\epsilon^2} \frac{\|\xi\|_X^2\|\xi+3(\phi+\phi_0)\|_Y}{\|\xi\|_X} \rightarrow 0,
    \end{split}
\end{equation*}
as $\|\xi\|_X \rightarrow 0$. Hence, $D_\phi \mathcal{F}(\phi,\epsilon)$ is the Fr\'echet derivative of $\mathcal{F}$.
\end{proof}

Given \re{freD}, we have by taking $\phi=0$ that
\begin{equation}
    \label{FreD0}
    D_\phi \mathcal{F}(0,\epsilon)[\xi] =-\xi_{xx} +\frac{1}{\epsilon^2}(3\phi_0^2\xi - \xi).
\end{equation}
 For condition II of Crandall-Rabinowitz Theorem, we need to analyze the structure of $\text{Ker} \big(D_\phi\mathcal{F}(0,\epsilon)\big)$ which is given by $D_\phi \mathcal{F}(0,\epsilon)[\xi] = 0$, $\xi\in X$. By \re{FreD0}, it is equivalent to solve the following system
\begin{equation}\label{ker}
    \begin{cases}
        &-\xi_{xx} + \frac{1}{\epsilon^2}(3\phi_0^2 \xi -\xi) = 0,\\
        &\xi_x(-1)=\xi_x(1)=0.
    \end{cases}
\end{equation}
This system can be solved by using an eigenfunction ansatz, i.e., 
\begin{equation}\label{eigen}
    \xi(x) = a_0 + \sum_{n=1}^\infty a_n\cos(Lnx) + \sum_{n=1}^\infty b_n\sin(Lnx),
\end{equation}
where $L$ is to be determined. By taking the derivative, we have
\begin{equation}
    \xi_x(x) = -\sum_{n=1}^\infty a_n Ln \sin(Lnx) + \sum_{n=1}^\infty b_n Ln \cos(Lnx).
\end{equation}
The two boundary conditions, $\xi_x(1)=0$ and $\xi_x(-1)=0$, yield that either $a_n=0$ or $b_n=0$.  If $a_n=0$, then $Ln=\frac{\pi}{2}+(n-1)\pi$, hence 
\begin{equation}
    \label{sol1}
    \xi(x) = a_0 + \sum_{n=0}^\infty b_n \sin\left(\left(\frac{\pi}{2}+n\pi\right)x \right);
\end{equation}
if $b_n=0$, then $Ln=n\pi$, and we have
\begin{equation}
    \label{sol2}
    \xi(x)=a_0 + \sum_{n=1}^\infty a_n \cos(n\pi x) = \sum_{n=0}^\infty a_n\cos(n\pi x).
\end{equation}

Next, we will determine the bifurcations with respect to  $\epsilon$ by solving \re{ker} with different trivial solution $\phi_0$.

\subsubsection{Bifurcations around $\phi_0=0$}
\begin{theorem}\label{bif1}
For each integer $n\ge 0$, $\epsilon_n^{(1)} = \frac{1}{\pi/2+n\pi}$, $(0,\epsilon_n^{(1)})$ is a bifurcation point to the system \re{eqn:shiftedSteadyAllenCahn} such that there is a bifurcation solution $(\phi_n(x,s),\epsilon_n(s))$ with
$$\epsilon_n(s)=\epsilon_n^{(1)}+s,\hspace{2em} \phi_n(x,s)=s\sin\left(\left(\frac{\pi}{2}+n\pi\right)x\right)+O(s^2),\hspace{2em}\text{where $|s|\ll1$.}$$
\end{theorem}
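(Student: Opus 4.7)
The plan is to verify the four hypotheses of the Crandall-Rabinowitz theorem (Theorem~\ref{CRthm}) for the operator $\mathcal{F}$ defined in \re{operatorF} with $\phi_0=0$, bifurcation parameter $\mu=\epsilon$, and bifurcation point $\epsilon_0=\epsilon_n^{(1)}=1/(\pi/2+n\pi)$, working in the Banach spaces $X=X^{l+2+\alpha}$ and $Y=X^{l+\alpha}$ defined in \re{spaces}. Condition~(I) is immediate: since $\phi_0=0$ is a trivial solution of the shifted system, $\mathcal{F}(0,\epsilon)=0$ for all $\epsilon>0$.

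For Condition~(II), I would specialize \re{FreD0} to $\phi_0=0$ to obtain the kernel equation $-\xi_{xx}-\xi/\epsilon^2=0$ with Neumann boundary conditions. Rather than using the Fourier ansatz of \re{eigen}, I would solve this linear ODE directly: the general solution is $\xi(x)=a\cos(x/\epsilon)+b\sin(x/\epsilon)$, and the conditions $\xi_x(\pm 1)=0$ reduce to the two scalar equations $b\cos(1/\epsilon)=0$ and $a\sin(1/\epsilon)=0$. At $\epsilon=\epsilon_n^{(1)}$ one has $\cos(1/\epsilon_n^{(1)})=0$ while $\sin(1/\epsilon_n^{(1)})=\pm 1\ne 0$, so necessarily $a=0$ and the kernel is the one-dimensional span of
\[
x_0(x)=\sin\!\left(\left(\tfrac{\pi}{2}+n\pi\right)x\right),
\]
as required.

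For Condition~(III), I would exploit that the linearized operator $\mathcal{L}\xi:=-\xi_{xx}-\xi/(\epsilon_n^{(1)})^2$ with Neumann boundary data is formally self-adjoint on $L^2(-1,1)$. By the Fredholm alternative applied to this elliptic operator in the Hölder setting, the range $Y_1=\mathrm{Im}\,D_\phi\mathcal{F}(0,\epsilon_n^{(1)})$ equals the $L^2$-orthogonal complement of $\mathrm{Ker}\,\mathcal{L}$ intersected with $Y$, i.e.\
\[
Y_1=\left\{f\in Y \;:\; \int_{-1}^1 f(x)\sin\!\left(\left(\tfrac{\pi}{2}+n\pi\right)x\right)\dif x=0\right\},
\]
which has codimension one in $Y$. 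For Condition~(IV), a direct computation gives $D_{\epsilon\phi}\mathcal{F}(0,\epsilon)[\xi]=2\xi/\epsilon^3$, so
\[
D_{\epsilon\phi}\mathcal{F}(0,\epsilon_n^{(1)})[x_0]=\frac{2}{(\epsilon_n^{(1)})^3}\sin\!\left(\left(\tfrac{\pi}{2}+n\pi\right)x\right),
\]
and its pairing against $x_0$ equals $\frac{2}{(\epsilon_n^{(1)})^3}\int_{-1}^1\sin^2((\pi/2+n\pi)x)\dif x>0$, so this element does not lie in $Y_1$.

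With all four hypotheses verified, Theorem~\ref{CRthm} produces a $C^{p-2}$ curve of nontrivial solutions $(\phi_n(\cdot,s),\epsilon_n(s))$ through $(0,\epsilon_n^{(1)})$ with tangent direction $x_0$, giving $\phi_n(x,s)=s\sin((\pi/2+n\pi)x)+O(s^2)$. After a smooth reparameterization of $s$ that is the identity to first order, the $\epsilon$-component takes the form $\epsilon_n(s)=\epsilon_n^{(1)}+s$, completing the proof. The only delicate step is Condition~(III), where one needs the Fredholm property of $\mathcal{L}$ between Hölder spaces with Neumann data; this is standard for second-order elliptic operators on a bounded interval but is the place where the abstract Banach-space setup has to be checked carefully rather than symbolically.
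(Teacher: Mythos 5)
Your proposal is correct and follows the same overall strategy as the paper: verify the four hypotheses of Theorem~\ref{CRthm} for the operator $\mathcal{F}$ of \re{operatorF} at $(0,\epsilon_n^{(1)})$, with the same computation of $D_{\epsilon\phi}\mathcal{F}$ for Condition~(IV). The differences are in how you handle Conditions~(II) and~(III). For the kernel, the paper substitutes the Fourier ansatz \re{eigen}/\re{sol1} into \re{FreD0} and reads off which mode is annihilated; you instead solve the second-order ODE $-\xi_{xx}-\xi/\epsilon^2=0$ in closed form and impose the Neumann conditions as a $2\times 2$ linear system. Your route is slightly more self-contained, since it shows the kernel is \emph{exactly} one-dimensional without appealing to completeness of the trigonometric system. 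For the range, the paper exhibits $\mathrm{Im}\,D_\phi\mathcal{F}(0,\epsilon_n^{(1)})$ as the span of all the remaining Fourier modes, whereas you characterize it via self-adjointness and the Fredholm alternative as the $L^2$-orthogonal complement of the kernel; the latter is the cleaner justification of surjectivity onto that complement (which the paper's span statement implicitly assumes), at the cost of invoking elliptic Fredholm theory in H\"older spaces, which you rightly flag as the step needing care. Both arguments land on the same codimension-one conclusion and the same non-tangency check, so the proofs are interchangeable; note only that your closing remark about reparameterizing so that $\epsilon_n(s)=\epsilon_n^{(1)}+s$ while keeping $\phi_n=s x_0+O(s^2)$ is a loose point (a genuine pitchfork has $\epsilon_n'(0)=0$), but the paper's own proof is silent on exactly the same issue, so this reflects the theorem's phrasing rather than a gap in your argument.
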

\begin{proof}
We need to verify the four conditions of Crandall-Rabinowitz Theorem at $(0,\epsilon_n^{(1)})$. In this case, we use the Fourier expansion of $\xi(x)$ in \re{sol1}. 
When $\phi_0=0$, it follows from \re{FreD0} that
\begin{equation*}
    D_\phi \mathcal{F}(0,\epsilon) [\xi(x)] =-\frac{a_0}{\epsilon^2} + \sum_{n=0}^\infty b_n\left[\left(\frac{\pi}{2}+n\pi\right)^2 - \frac{1}{\epsilon^2} \right] \sin\left(\left(\frac{\pi}{2}+n\pi\right)x\right).
\end{equation*}
If $\epsilon = \epsilon_n^{(1)} = \frac{1}{\pi/2+n\pi}$, the term with $\sin\big((\frac{\pi}{2}+n\pi)x\big)$ disappears while all the other terms remain. Hence, we have
\begin{align*}
    &D_\phi \mathcal{F}\left(0,\epsilon_n^{(1)}\right) \left[b_n\sin\left(\left(\frac{\pi}{2}+n\pi\right)x\right)\right] = b_n\left[\left(\frac{\pi}{2}+n\pi\right)^2 -\left(\epsilon_n^{(1)}\right)^{-2} \right] \sin\left(\left(\frac{\pi}{2}+n\pi\right)x\right) = 0, \\
    &D_\phi \mathcal{F}(0,\epsilon_n^{(1)}) [\xi(x)] =-\frac{a_0}{\epsilon^2} + \sum_{\substack{k=0 \\ k\neq n}}^\infty b_k\left[\left(\frac{\pi}{2}+k\pi\right)^2 - \left(\epsilon_n^{(1)}\right)^{-2} \right] \sin\left(\left(\frac{\pi}{2}+k\pi\right)x\right).
\end{align*}
It follows from the above two equations that
\begin{equation*}
    \text{Im}\ D_\phi \mathcal{F}\left(0,\epsilon_n^{(1)}\right) = \text{span}\Big\{1, \sin\left(\frac{\pi}{2}x\right),\cdots,\sin\left(\left(\frac{\pi}2 +(n-1)\right)x\right), \sin\left(\left(\frac{\pi}2 +(n+1)\right)x\right),\cdots\Big\},
\end{equation*}
and
\begin{equation*}
    \text{Ker}\ D_\phi \mathcal{F}\left(0,\epsilon_n^{(1)}\right)  = \text{span}\Big\{\sin\left(\left(\frac{\pi}{2}+n\pi\right)x\right) \Big\},
\end{equation*}
which indicate that $\text{dim}\left(\text{Ker}\,D_\phi \mathcal{F}\left(0,\epsilon_n^{(1)}\right)\right) = 1$ and $\text{codim}\left(\text{Im}\, D_\phi \mathcal{F}\left(0,\epsilon_n^{(1)}\right)\right)=1$. Finally, by differentiating \re{FreD0} with respect to $\epsilon$, substituting into $\phi_0=0$, and applying the operator on $\sin\big((\frac{\pi}{2}+n\pi)x\big)$, we obtain
\begin{equation*}
    D_{\epsilon \phi} \mathcal{F}\left(0,\epsilon_n^{(1)}\right) \left[\sin\left(\left(\frac{\pi}{2}+n\pi\right)x\right)\right] = 2\left(\epsilon_n^{(1)}\right)^{-3} \sin\left(\left(\frac{\pi}{2}+n\pi\right)x\right)
   \; \notin \text{ Im}\,D_\phi \mathcal{F}\left(0,\epsilon_n^{(1)}\right).
\end{equation*}
Therefore, all four conditions of Crandall-Rabinowitz Theorem are satisfied, and the proof is completed.
\end{proof}

In the similar manner, using the solution expression \re{sol2}, we obtain the following result.
\begin{theorem}\label{bif2}
For each integer $n\ge1$ and $\epsilon_n^{(2)} = \frac{1}{n\pi}$, $\left(0,\epsilon_n^{(2)}\right)$ is a bifurcation point to the system \re{eqn:shiftedSteadyAllenCahn} such that there is a bifurcation solution $(\phi_n(x,s),\epsilon_n(s))$ with
$$\epsilon_n(s)=\epsilon_n^{(2)}+s,\hspace{2em} \phi_n(x,s)=s\cos(n\pi x)+O(s^2),\hspace{2em}\text{where $|s|\ll1$.}$$
\end{theorem}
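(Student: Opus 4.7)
The plan is to repeat the Crandall–Rabinowitz verification carried out for Theorem \ref{bif1}, but using the even (cosine) ansatz \re{sol2} rather than the odd (sine) ansatz \re{sol1}. Since Condition (I) of Theorem \ref{CRthm} has already been established in the $\phi_0 = 0$ setting ($\mathcal{F}(0,\epsilon) \equiv 0$), the task reduces to diagonalizing $D_\phi\mathcal{F}(0,\epsilon)$ on the cosine basis and locating the parameter values at which a single cosine mode lies in the kernel.

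First, I will apply $D_\phi\mathcal{F}(0,\epsilon)[\xi] = -\xi_{xx} - \xi/\epsilon^2$ (the $\phi_0=0$ specialization of \re{FreD0}) to $\xi(x) = \sum_{k=0}^\infty a_k\cos(k\pi x)\in X^{l+2+\alpha}$, which boundary conditions $\xi_x(\pm 1)=0$ enforce automatically. A term-by-term computation gives
\begin{equation*}
D_\phi\mathcal{F}(0,\epsilon)[\xi(x)] = -\frac{a_0}{\epsilon^2} + \sum_{k=1}^\infty a_k\!\left[(k\pi)^2 - \frac{1}{\epsilon^2}\right]\!\cos(k\pi x).
\end{equation*}
Setting $\epsilon = \epsilon_n^{(2)} = 1/(n\pi)$ with $n\ge 1$ kills exactly the $k=n$ mode, so
\begin{equation*}
\mathrm{Ker}\,D_\phi\mathcal{F}\!\left(0,\epsilon_n^{(2)}\right) = \mathrm{span}\{\cos(n\pi x)\}, \qquad \mathrm{Im}\,D_\phi\mathcal{F}\!\left(0,\epsilon_n^{(2)}\right) = \overline{\mathrm{span}}\{1,\cos(k\pi x):k\ge 1,\ k\neq n\},
\end{equation*}
which gives Conditions (II) and (III) with the missing direction being precisely $\cos(n\pi x)$.

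For Condition (IV), I would differentiate \re{FreD0} in $\epsilon$ at $\phi_0=0$, obtaining
\begin{equation*}
D_{\epsilon\phi}\mathcal{F}(0,\epsilon)[\xi] = \frac{2}{\epsilon^3}\,\xi,
\end{equation*}
and then evaluate at $\xi = \cos(n\pi x)$ and $\epsilon=\epsilon_n^{(2)}$, yielding $2(\epsilon_n^{(2)})^{-3}\cos(n\pi x)$, which is a nonzero multiple of the kernel generator and hence lies outside the image. Crandall–Rabinowitz then produces a smooth branch $(\phi_n(x,s),\epsilon_n(s))$ with the stated leading-order form, since the parametrization given by the theorem uses the kernel generator as the tangent direction $x'(0)$, forcing $\phi_n(x,s)=s\cos(n\pi x)+O(s^2)$ and $\epsilon_n(s)=\epsilon_n^{(2)}+s$ after an affine reparametrization of $s$.

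No step looks genuinely hard: the Fréchet derivative is linear with constant coefficients, so the spectral analysis is explicit, and the transversality check is a one-line computation. The only mild subtlety is making sure the image description is correct in the Hölder space $Y=X^{l+\alpha}$ rather than in $L^2$; this is handled by observing that $D_\phi\mathcal{F}(0,\epsilon_n^{(2)})$ is a compact perturbation of $-\partial_{xx}$ with Neumann data on $[-1,1]$, so it is Fredholm of index zero, and the codimension-one image claim is then equivalent to the one-dimensional kernel claim already verified.
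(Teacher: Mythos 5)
Your proposal is correct and follows exactly the route the paper intends: the paper proves Theorem \ref{bif1} in detail with the sine ansatz \re{sol1} and then states that Theorem \ref{bif2} follows ``in the similar manner'' using \re{sol2}, which is precisely the cosine-mode diagonalization, kernel/image identification, and transversality check you carry out. Your added remark on Fredholmness of $D_\phi\mathcal{F}(0,\epsilon_n^{(2)})$ in the H\"older space is a small extra justification the paper omits, but it does not change the argument.
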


\subsubsection{No bifurcations around  $\phi_0=\pm1$}
When $\phi_0 = \pm1$, we derive from \re{FreD0} that $D_\phi \mathcal{F}(0,\epsilon)[\xi] = -\xi_{xx}+\frac{2}{\epsilon^2}\xi$. Letting it equal to 0, we have
$$-\xi_{xx} + \frac{2}{\epsilon^2}\xi = 0.$$
We multiply both sides with $\xi$ and integrate them over $[-1,1]$. From integration by parts, one can obtain
\begin{equation*}
    \int_{-1}^1 \left(|\xi_x|^2 + |\xi|^2\right) \dif x = 0,
\end{equation*}
which yields $\xi\equiv0$ in this case. Therefore, there are no bifurcation solutions around the constant solutions $\phi\equiv\phi_0=\pm1$. Alternatively, one can also use the Crandall-Rabinowitz Therorem to show that there are no bifurcation solutions. To this end, we substitute the solution expression \re{sol2} into $D_\phi \mathcal{F}(0,\epsilon)[\xi]$ to obtain
\begin{equation*}
    D_\phi \mathcal{F}(0,\epsilon) [\xi(x)] =\frac{2 a_0}{\epsilon^2} + \sum_{n=1}^\infty a_n\left[(n\pi)^2 + \frac{2}{\epsilon^2} \right] \cos(n\pi x).
\end{equation*}
The coefficients $[(n\pi)^2+\frac{2}{\epsilon^2}]$ are always positive, so it is impossible to make any term of $\cos(n\pi x)$ varnish. Therefore, we cannot find any value of $\epsilon$ to meet the four conditions of the Crandall-Rabinowitz Theorem. The same conclusion holds if we use the solution expression \re{sol1}. Hence, it also proves that there are no bifurcation points around the constant solutions $\phi\equiv\phi_0=\pm1$.

\subsection{Bifurcation diagram}
Guided by Theorems \ref{bif1} and \ref{bif2}, we consider the bifurcation points along the trivial solution branch $\phi=0$.  First, we discretized Allen-Cahn equation (\ref{eqn:steadyAllenCahn}) by using the finite difference method, namely,
\begin{equation}\label{AC_FD}
\begin{cases}
&-\frac{\phi_{i+1}+\phi_{i-1}-2\phi_{i}}{h^2} + \frac{1}{\epsilon} (\phi^3_i - \phi_i) = 0 \hspace{2em} 0\leq i\leq N,
\end{cases}
\end{equation}
where $h=2/N$, $N$ is the number of grid points, and $\phi_i$ is the numerical approximation of $\phi(-1+hi)$.
Moreover, we introduce two ghost points to ensure the boundary condition, namely, $\phi_{-1} =\phi_1, \phi_{N+1} = \phi_N$. By setting $N=200$, we compute the bifurcation diagram of 1D Allen-Cahn equation via the homotopy tracking and show the results in Fig. \ref{Fig:Bifurcation} for $n\leq 10$. 
Note that the bifurcation points of the two eigenfunctions in \re{sol1} and \re{sol2} alternate in the diagram. 

For any given $\epsilon$, multiple solutions can be computed from the bifurcation diagram. For instance, we have $12$ non-trivial solutions for $\epsilon=0.1$ shown in Fig. \ref{Fig:ms}. 

\begin{figure}
  \includegraphics[width=6in]{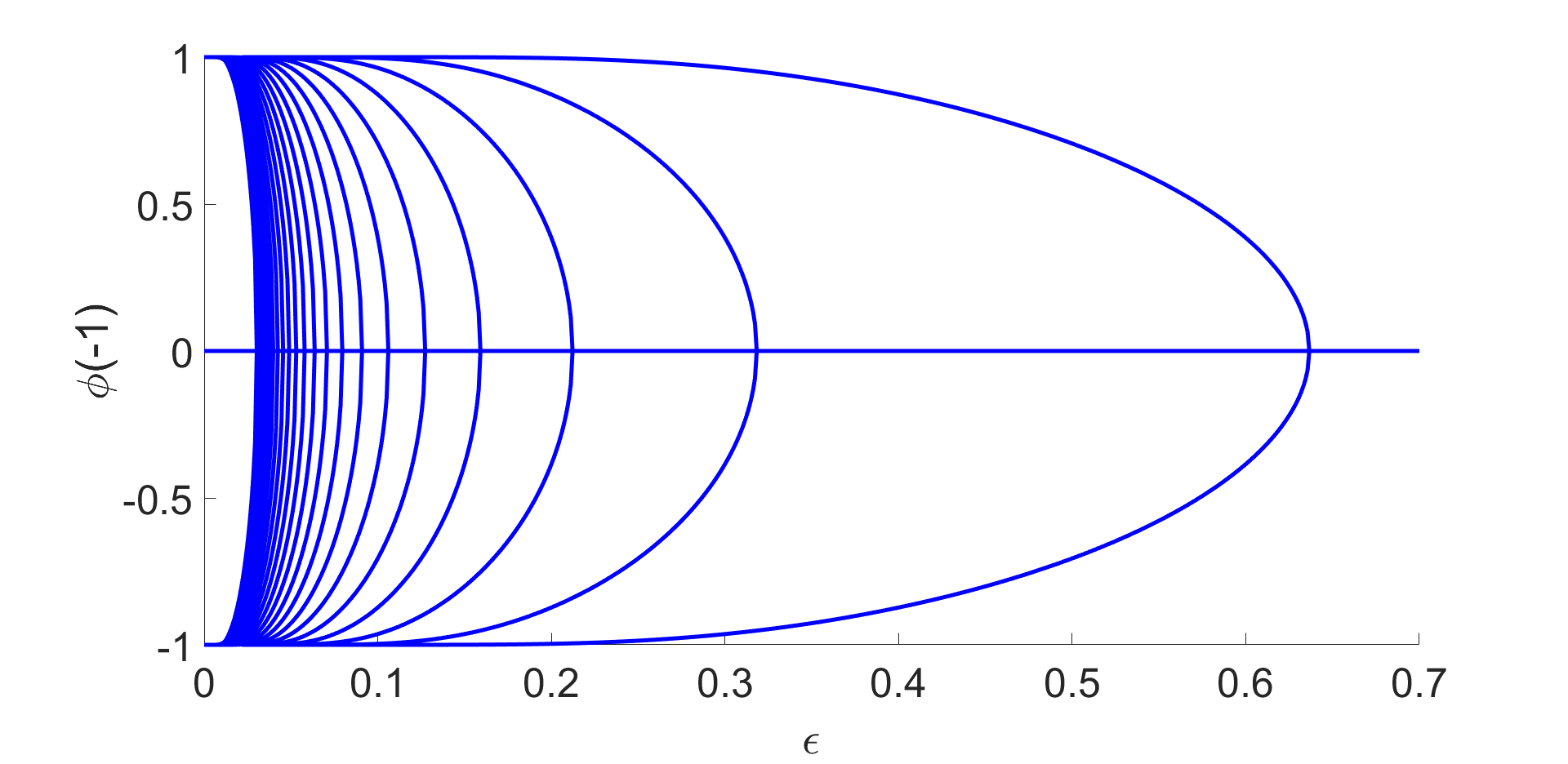}
  \caption{The bifurcation diagram of the 1D Allen-Cahn equation v.s. $\epsilon$ for $n\leq 10$.}\label{Fig:Bifurcation}
\end{figure}

\begin{figure}
  \centering
  \includegraphics[width=6in]{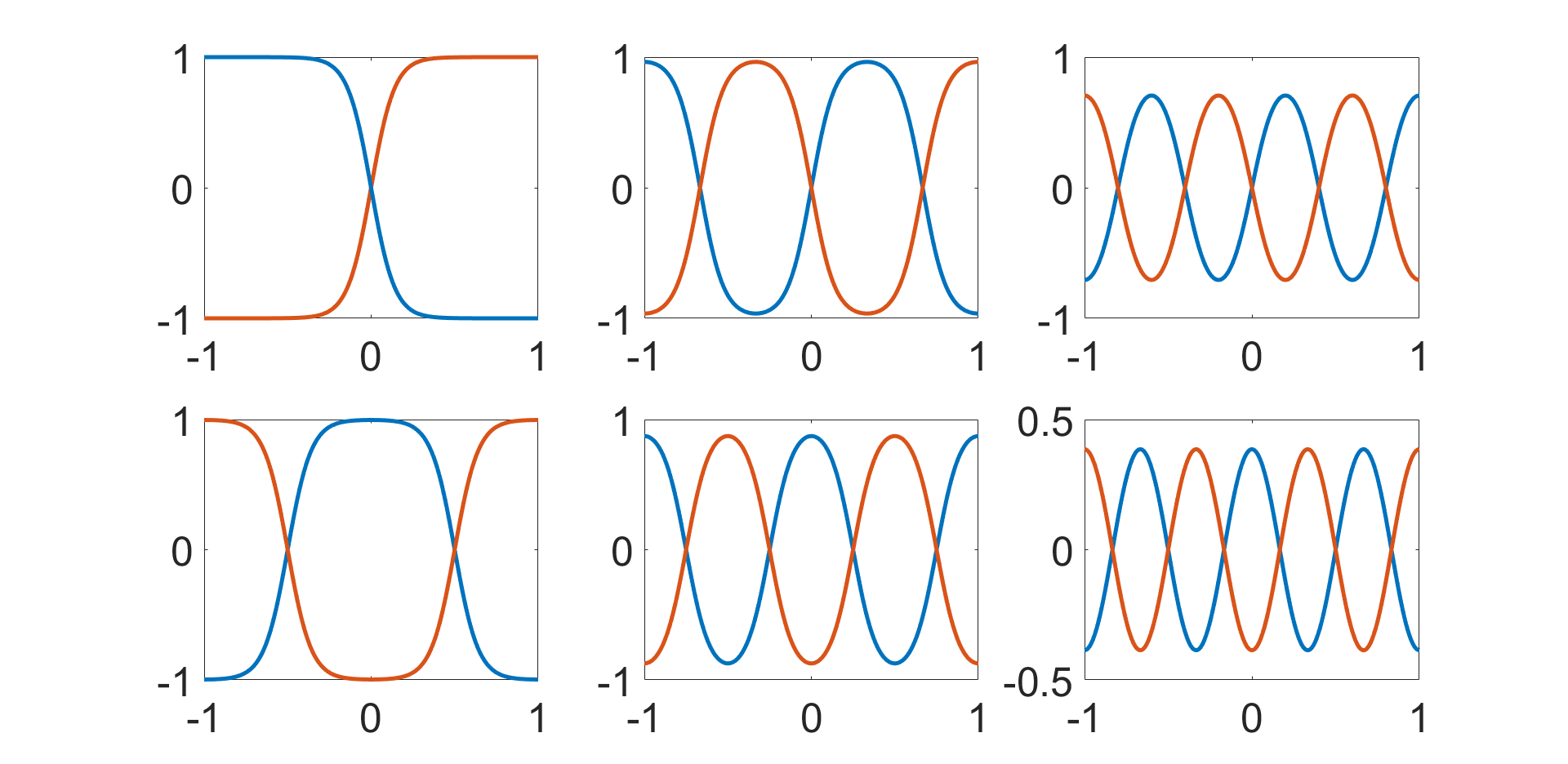}
  \caption{The 12 non-trivial solutions of the 1D Allen-Cahn equation for $\epsilon=0.1$}\label{Fig:ms}
\end{figure}

\begin{comment}
We consider the bifurcation points along the zero solution branch and solve the following the linearized system
\bes
-\psi_{xx}-\frac{1}{\epsilon^2}(\psi)=0\hbox{~and~}\psi_x(-1)=\psi_x(1)=0.
\ees
The above eigenvalue problem has the following eigenvalue and eigenfunction pairs,
\begin{align}
&(\frac{\pi}{2} + n\pi )^2, \sin((\frac{\pi}{2} + n\pi )x), &&\text{ for $n = 0,1,2,3...$}\\
&(n\pi )^2, \cos((n\pi )x), &&\text{ for $n = 1,2,3...$}
\end{align}
Corresponding to the eigenfunctions, we have the bifurcation points $\epsilon=\frac{1}{\frac{\pi}{2} + n\pi }$, $n=0,1,2,\cdots.$, and $\epsilon = \frac{1}{n\pi}$, $n = 1,2,3, \cdots $. 
\end{comment}

\section{Cahn–Hilliard equation}
Next, we consider Cahn-Hilliard (CH) equation:
\bes
\begin{aligned}
\frac{\partial \phi}{\partial t}(\mathbf{x}, t) &=\Delta \mu(\mathbf{x}, t), \\
\mu(\mathbf{x}, t) &= -\epsilon \Delta \phi(\mathbf{x}, t) + \frac{1}{\epsilon} W^{\prime}(\phi(\mathbf{x}, t)),
\end{aligned}
\ees
where $\mu$ is the chemical potential of the system. CH equations arise as a phenomenological model for isothermal phase separation and can be viewed as the $H^{-1}$ gradient flow dynamics of Ginzburg-Landau free energy (\ref{eqn:GL}).
The 1D steady state system of CH equation reads
\begin{equation}
    \label{1dCH}
    \begin{cases}
    & \mu_{xx} = 0,\\
    &-\phi_{xx}+\frac{1}{\epsilon^2}(\phi^3-\phi)=\mu(x),\\
    &\phi_x(-1)=\phi_x(1)=0,
    \end{cases}
\end{equation}
For simplicity, we only consider the case where $\mu(x)\equiv\mu_0$ is a constant. 

We start with finding trivial steady states $\phi\equiv \phi_0$ for the system \re{1dCH}. The trivial steady state satisfies
\begin{equation}
    \label{CH1}
    \phi_0^3 - \phi_0 = \mu_0\epsilon^2.
\end{equation}
If $-\frac{2}{3\sqrt{3}}<\mu_0\epsilon^2<\frac{2}{3\sqrt{3}}$, the equation \re{CH1} admits 3 real solutions. When $\mu_0=0$, the middle solution is $\phi_0 = 0$; in addition, in the case where there are 3 real solutions, the middle solution takes an approximation form as
\begin{equation}
    \label{CH:sol}
    \phi_0 = \phi_0(\mu_0,\epsilon) = 0 - \mu_0\epsilon^2 + O(\mu_0^2 \epsilon^4),
\end{equation}
when $|\mu_0 \epsilon^2|\ll 1$. 

The bifurcation analysis for Cahn-Hilliard equation is similar to that for Allen-Cahn equation in the last section. To begin with, we shift the solution to \re{1dCH} by letting $\widetilde{\phi} = \phi - \phi_0(\mu_0,\epsilon)$. After dropping the $\,\widetilde{}\,$ notation, we obtain the new system as 
\begin{equation}\label{CH}
    \begin{cases}
        &-\phi_{xx} + \frac{1}{\epsilon^2} \Big[(\phi+\phi_0(\mu_0,\epsilon))^3 - (\phi+\phi_0(\mu_0,\epsilon))\Big] = \mu_0,\\
        &\phi_x(-1) = \phi_x(1) = 0.
    \end{cases}
\end{equation}

Similar as in the last section, we set two Banach spaces in Crandall-Rabinowitz Theorem as $X=X^{l+2+\alpha}$ and $Y=X^{l+\alpha}$, where the space $X^{l+\alpha}$ is defined in \re{Banachsp}. In addition, we define the following operator
\begin{equation}
    \label{CH:F}
    \widetilde{\mathcal{F}}(\phi,\epsilon) = -\phi_{xx} + \frac{1}{\epsilon^2} \Big[(\phi+\phi_0(\mu_0,\epsilon))^3 - (\phi+\phi_0(\mu_0,\epsilon)\Big] - \mu_0.
\end{equation}
By Lemma \ref{lem1.1}, it is easy to compute the Fr\'echet derivative of $\widetilde{\mathcal{F}}(\phi,\epsilon)$,
\begin{equation}
    \label{CH:FD}
    D_\phi \widetilde{\mathcal{F}}(\phi,\epsilon)[\xi] = -\xi_{xx} + \frac{1}{\epsilon^2}[3(\phi+\phi_0(\mu_0,\epsilon))^2 \xi - \xi].
\end{equation}
Noticing that $D_\phi \widetilde{\mathcal{F}}(\phi,\epsilon)[\xi]$ takes the same form as $D_\phi \mathcal{F}(\phi,\epsilon)[\xi]$, the bifurcation analysis is along similar lines. Since $\phi=0$ is always a solution to \re{CH}, we substitute $\phi=0$ in \re{CH:FD} to obtain
\begin{equation}
    \label{CH:FD0}
    D_\phi \widetilde{\mathcal{F}}(0,\epsilon)[\xi] = -\xi_{xx} + \frac{1}{\epsilon^2}[3\phi_0(\mu_0,\epsilon)^2 \xi - \xi].
\end{equation}
If $\xi(x)$ takes the form of \re{sol1}, we have
\begin{equation*}
    D_\phi \widetilde{\mathcal{F}}(0,\epsilon)[\xi] = \frac{3\phi_0(\mu_0,\epsilon)^2 - 1}{\epsilon^2}a_0 + \sum_{n=0}^\infty b_n\left[\left(\frac{\pi}{2}+n\pi\right)^2 + \frac{3\phi_0(\mu_0,\epsilon)^2 -1}{\epsilon^2} \right] \sin\left(\left(\frac{\pi}{2}+n\pi\right)x\right).
\end{equation*}
The term of $ \sin\big((\frac{\pi}{2}+n\pi)x\big)$ becomes zero if and only if
\begin{equation}
    \label{CH2}
    \left(\frac{\pi}{2}+n\pi\right)^2 + \frac{3\phi_0(\mu_0,\epsilon)^2 -1}{\epsilon^2} = 0,
\end{equation}
hence we require $3\phi_0(\mu_0,\epsilon)^2 - 1 < 0$. Combining with \re{CH1}, we know that when $-\frac{2}{3\sqrt{3}}< \mu_0\epsilon^2 < \frac{2}{3\sqrt{3}}$, the middle solution satisfies the condition $3\phi_0(\mu_0,\epsilon)^2 - 1 < 0$. Therefore, all the non-trivial solutions bifurcate from the middle solution when $-\frac{2}{3\sqrt{3}}< \mu_0\epsilon^2 < \frac{2}{3\sqrt{3}}$, and in the following analysis, we use $\phi_0(\mu_0,\epsilon)$ to exclusively represent the middle solution.

To further locate the bifurcation points, we need to solve $\epsilon$ from \re{CH2}, and it is equivalent to solve $\mathcal{H}_n(\mu_0,\epsilon)=0$, where
\begin{equation}
    \label{CH:H}
    \mathcal{H}_n(\mu_0,\epsilon) = \left(\frac{\pi}{2}+n\pi\right)^2 - \frac{1}{\epsilon^2} + \frac{3\phi_0(\mu_0,\epsilon)^2}{\epsilon^2}.
\end{equation}
For an integer $n\ge 0$, we denote the solution  by $\epsilon=\epsilon_n$. First, we consider the case when $\mu_0=0$, in which the middle solution $\phi_0(0,\epsilon)=0$. Substituting it into \re{CH:H}, we derive the solution to $\mathcal{H}_n(0,\epsilon)=0$ is 
\begin{equation*}
    \epsilon=\epsilon_n^0 = \frac{1}{\pi/2 + n\pi}.
\end{equation*}
When $\mu_0\neq 0$ and $|\mu_0|\ll 1$, we have the approximation of $\phi_0(\mu_0,\epsilon)$ in \re{CH:sol}. 
It yields from combining \re{CH:sol} and \re{CH:H} that
\begin{equation}\label{CH:H1}
    \mathcal{H}_n(\mu_0,\epsilon) = \left(\frac{\pi}{2}+n\pi\right)^2 - \frac{1}{\epsilon^2} + 3\mu_0^2 \epsilon^2 + O(\mu_0^3\epsilon^4).
\end{equation}
Differentiating with respect to $\mu_0^2$, we get
\begin{equation*}
    \frac{\p \mathcal{H}_n}{\p (\mu_0^2)}(0,\epsilon) = 3\epsilon^2 \neq 0,
\end{equation*}
when $\epsilon\neq 0$. Therefore, it follows from Implicit Function Theorem that, for each nonzero $\mu_0$ with $|\mu_0|\ll 1$, there exists a unique solution to $\mathcal{H}_n(\mu_0,\epsilon)=0$; more specifically, the solution is $\epsilon_n=\epsilon_n^0 + O(\mu_0^2)$, which is close to $\epsilon_n^0$.

Similar to the proof of Theroem \ref{bif1}, we can apply Crandall-Rabinowitz Theorem to derive the following theorems for the system \re{1dCH}:

\begin{theorem}\label{bifCH1}
For a fixed $\mu_0$ with $|\mu_0|\ll 1$, and each integer $n\ge 0$, let $\epsilon_n^{(1)} = \frac{1}{\pi/2 + n\pi}+ O(\mu_0^2)$. Then $\left(\phi_0(\mu_0,\epsilon_n^{(1)}),\epsilon_n^{(1)}\right)$ is a bifurcation point to the system \re{1dCH}, where $\phi_0(\mu_0,\epsilon_n^{(1)})$ denotes the middle solution of the equation \re{CH1}. In addition, the bifurcation solution $(\phi_n(x,s),\epsilon_n(s))$ can be represented by:
$$\epsilon_n(s)=\epsilon_n^{(1)}+s,\hspace{2em} \phi_n(x,s)=\phi_0(\mu_0,\epsilon_n^{(1)}) +  s\sin\left(\left(\frac{\pi}{2}+n\pi\right)x\right)+O(s^2),\hspace{2em}\text{where $|s|\ll1$.}$$
\end{theorem}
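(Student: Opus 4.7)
The plan is to apply the Crandall-Rabinowitz theorem (Theorem \ref{CRthm}) to the shifted operator $\widetilde{\mathcal{F}}$ of \re{CH:F} at the candidate bifurcation point $(0,\epsilon_n^{(1)})$, mirroring the Allen-Cahn argument of Theorem \ref{bif1}. Condition (I) is immediate: by construction $\phi_0(\mu_0,\epsilon)$ is a root of $\phi^3-\phi=\mu_0\epsilon^2$, so $\widetilde{\mathcal{F}}(0,\epsilon)=\frac{1}{\epsilon^2}(\phi_0^3-\phi_0)-\mu_0 \equiv 0$ whenever $|\mu_0\epsilon^2|<\frac{2}{3\sqrt{3}}$, giving a genuine trivial branch $\phi\equiv 0$ of the shifted equation.

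For (II) and (III) I would expand a test function $\xi\in X$ in the sine basis \re{sol1} (and separately in the cosine basis \re{sol2}) and apply $D_\phi\widetilde{\mathcal{F}}(0,\epsilon_n^{(1)})$ using the diagonal action computed immediately after \re{CH:FD0}. Since $\epsilon_n^{(1)}$ is defined by $\mathcal{H}_n(\mu_0,\epsilon_n^{(1)})=0$, the coefficient $(\pi/2+k\pi)^2+\frac{3\phi_0^2-1}{\epsilon^2}$ vanishes exactly when $k=n$ and is nonzero for every $k\neq n$; the restriction $3\phi_0^2-1<0$ (which holds for the middle root in the admissible regime) makes $k\mapsto (\pi/2+k\pi)^2$ strictly increasing through the single crossing value $\frac{1-3\phi_0^2}{\epsilon^2}$, and for $|\mu_0|\ll 1$ none of the cosine coefficients $(k\pi)^2+\frac{3\phi_0^2-1}{\epsilon^2}$ or the constant term can vanish at the same $\epsilon_n^{(1)}$. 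This yields $\mathrm{Ker}\,D_\phi\widetilde{\mathcal{F}}(0,\epsilon_n^{(1)})=\mathrm{span}\{\sin((\pi/2+n\pi)x)\}$ and $\mathrm{codim}\,\mathrm{Im}\,D_\phi\widetilde{\mathcal{F}}(0,\epsilon_n^{(1)})=1$.

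The main obstacle is the transversality condition (IV), because unlike in the Allen-Cahn case the reference state $\phi_0$ itself now depends on $\epsilon$, so computing $D_{\epsilon\phi}\widetilde{\mathcal{F}}$ requires an implicit-function chain rule. I would differentiate $D_\phi\widetilde{\mathcal{F}}(0,\epsilon)[\xi]=-\xi_{xx}+\frac{3\phi_0(\mu_0,\epsilon)^2-1}{\epsilon^2}\xi$ in $\epsilon$, using $\partial_\epsilon\phi_0=\frac{2\mu_0\epsilon}{3\phi_0^2-1}$ from implicit differentiation of $\phi_0^3-\phi_0=\mu_0\epsilon^2$, to obtain
\[
D_{\epsilon\phi}\widetilde{\mathcal{F}}(0,\epsilon)[\xi]=\left(\frac{12\mu_0\phi_0}{\epsilon(3\phi_0^2-1)}-\frac{2(3\phi_0^2-1)}{\epsilon^3}\right)\xi.
\]
At $\mu_0=0$ this collapses to $\frac{2}{\epsilon^3}\xi$, recovering the Allen-Cahn transversality coefficient, so by continuity the prefactor at $(\mu_0,\epsilon_n^{(1)})$ equals $\frac{2}{(\epsilon_n^{(1)})^3}+O(\mu_0^2)$, which is strictly positive for $|\mu_0|\ll 1$. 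Applied to $\xi_0=\sin((\pi/2+n\pi)x)$ this produces a nonzero multiple of $\xi_0$, which lies in the kernel and therefore outside the codimension-one image, verifying (IV).

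With all four hypotheses in hand, Crandall-Rabinowitz delivers a $C^{p-2}$ bifurcation curve $(\widetilde{\phi}_n(x,s),\epsilon_n(s))$ for the shifted problem with $\epsilon_n(s)=\epsilon_n^{(1)}+s$ and $\widetilde{\phi}_n(x,s)=s\sin((\pi/2+n\pi)x)+O(s^2)$. Undoing the translation via $\phi_n(x,s)=\widetilde{\phi}_n(x,s)+\phi_0(\mu_0,\epsilon_n(s))$ gives the asserted expansion of $\phi_n(x,s)$ around the middle trivial state, completing the proof.
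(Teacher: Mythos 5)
Your proposal is correct and follows essentially the same route as the paper: the paper sets up $\widetilde{\mathcal{F}}$, its Fr\'echet derivative, the kernel condition $\left(\frac{\pi}{2}+n\pi\right)^2+\frac{3\phi_0^2-1}{\epsilon^2}=0$, and the implicit-function-theorem location of $\epsilon_n^{(1)}$, and then simply invokes ``similar to the proof of Theorem \ref{bif1}'' to conclude via Crandall--Rabinowitz. Your explicit verification of the transversality condition (IV), accounting for the $\epsilon$-dependence of $\phi_0(\mu_0,\epsilon)$ through implicit differentiation and showing the prefactor is $\frac{2}{(\epsilon_n^{(1)})^3}+O(\mu_0^2)>0$, is a correct computation of the one step the paper leaves entirely to the reader.
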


\begin{theorem}\label{bifCH2}
For a fixed $\mu_0$ with $|\mu_0|\ll 1$, and each integer $n\ge 1$, let $\epsilon_n^{(2)} = \frac{1}{n\pi} + O\left(\mu_0^2\right)$. Then $\left(\phi_0(\mu_0,\epsilon_n^{(2)}),\epsilon_n^{(2)}\right)$ is a bifurcation point to the system \re{1dCH}, where $\phi_0(\mu_0,\epsilon_n^{(2)})$ denotes the middle solution of the equation \re{CH1}. In addition, the bifurcation solution $(\phi_n(x,s),\epsilon_n(s))$ can be represented by:
$$\epsilon_n(s)=\epsilon_n^{(2)}+s,\hspace{2em} \phi_n(x,s)=\phi_0(\mu_0,\epsilon_n^{(2)}) +  s\cos(n\pi x)+O(s^2),\hspace{2em}\text{where $|s|\ll1$.}$$
\end{theorem}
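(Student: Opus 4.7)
The plan is to run the Crandall--Rabinowitz machinery exactly as in Theorem \ref{bifCH1}, but with the cosine ansatz \re{sol2} in place of the sine ansatz \re{sol1}. The operator $\widetilde{\mathcal{F}}$ from \re{CH:F} and its linearization \re{CH:FD0} are already set up, and the shift to the middle branch $\phi_0(\mu_0,\epsilon)$ ensures Condition (I) for free: by \re{CH1}, $\widetilde{\mathcal{F}}(0,\epsilon) = \frac{1}{\epsilon^2}(\phi_0^3-\phi_0) - \mu_0 = 0$ for every $\epsilon$ in the admissible range.

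For Conditions (II) and (III), I would apply $D_\phi\widetilde{\mathcal{F}}(0,\epsilon)$ to the expansion \re{sol2} to obtain
\[
D_\phi\widetilde{\mathcal{F}}(0,\epsilon)[\xi] = \frac{3\phi_0(\mu_0,\epsilon)^2 - 1}{\epsilon^2} a_0 + \sum_{k=1}^\infty a_k\left[(k\pi)^2 + \frac{3\phi_0(\mu_0,\epsilon)^2-1}{\epsilon^2}\right]\cos(k\pi x).
\]
The $k=n$ mode vanishes exactly when the analogue $\widetilde{\mathcal{H}}_n(\mu_0,\epsilon) := (n\pi)^2 - \frac{1-3\phi_0(\mu_0,\epsilon)^2}{\epsilon^2} = 0$ of \re{CH:H} holds. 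At $\mu_0 = 0$ the middle branch satisfies $\phi_0 = 0$ and this collapses to $\epsilon = 1/(n\pi)$. Applying the Implicit Function Theorem argument from the paragraph preceding Theorem \ref{bifCH1} (noting $\partial \widetilde{\mathcal{H}}_n/\partial(\mu_0^2) = 3\epsilon^2 \neq 0$ via \re{CH:sol}) produces a unique root $\epsilon_n^{(2)} = \frac{1}{n\pi} + O(\mu_0^2)$. At this $\epsilon_n^{(2)}$, the constant-term coefficient $(3\phi_0^2-1)/\epsilon^2 = -(n\pi)^2 \neq 0$, and the other cosine modes have coefficients $(k^2 - n^2)\pi^2 + O(\mu_0^2) \neq 0$ for $k \neq n$; a quick check that no sine mode from \re{sol1} simultaneously vanishes (since $(\pi/2 + k\pi)^2 \neq (n\pi)^2$ for any $k$) confirms $\mathrm{Ker}\,D_\phi\widetilde{\mathcal{F}}(0,\epsilon_n^{(2)}) = \mathrm{span}\{\cos(n\pi x)\}$ and codimension-one image.

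The one step that is not purely mechanical is Condition (IV), and this is where I expect the main work. Because $\phi_0$ depends on $\epsilon$, differentiating \re{CH:FD0} in $\epsilon$ produces an extra term:
\[
D_{\epsilon\phi}\widetilde{\mathcal{F}}(0,\epsilon)[\xi] = -\frac{2}{\epsilon^3}\bigl(3\phi_0^2 - 1\bigr)\xi + \frac{6\phi_0}{\epsilon^2}\frac{\partial \phi_0}{\partial \epsilon}\,\xi.
\]
Evaluating at $\epsilon = \epsilon_n^{(2)}$ with $\xi = \cos(n\pi x)$ and using the kernel identity $3\phi_0^2 - 1 = -(n\pi)^2\epsilon^2$, the first summand contributes $\frac{2(n\pi)^2}{\epsilon_n^{(2)}}\cos(n\pi x)$. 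The expansion \re{CH:sol} gives $\phi_0 \cdot \partial_\epsilon \phi_0 = O(\mu_0^2 \epsilon^3)$, so the second summand is $O(\mu_0^2)$ and does not cancel the leading term for $|\mu_0|\ll 1$. Hence $D_{\epsilon\phi}\widetilde{\mathcal{F}}(0,\epsilon_n^{(2)})[\cos(n\pi x)]$ has a nonzero $\cos(n\pi x)$ component and therefore lies outside the image.

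With all four conditions verified, Theorem \ref{CRthm} produces the nontrivial branch with tangent $\cos(n\pi x)$ at $\epsilon = \epsilon_n^{(2)}$; unshifting by $\phi_0(\mu_0,\epsilon_n^{(2)})$ recovers the asserted parametrization $\phi_n(x,s) = \phi_0(\mu_0,\epsilon_n^{(2)}) + s\cos(n\pi x) + O(s^2)$. The entire argument is parallel to the proof of Theorem \ref{bifCH1}, so in a write-up I would highlight only the cosine-specific coefficients and the transversality computation, referring back for the remainder.
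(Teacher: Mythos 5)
Your proposal is correct and follows essentially the same route as the paper, which proves Theorem \ref{bifCH2} only by reference ("similar to the proof of Theorem \ref{bif1}") using the cosine ansatz \re{sol2}, the kernel condition $(n\pi)^2 + \frac{3\phi_0^2-1}{\epsilon^2}=0$, and the implicit-function-theorem perturbation of $\epsilon_n^0=\frac{1}{n\pi}$. Your treatment of Condition (IV) is in fact more careful than the paper's sketch, since you explicitly account for the extra $\frac{6\phi_0}{\epsilon^2}\partial_\epsilon\phi_0\,\xi$ term coming from the $\epsilon$-dependence of the middle branch and verify it is only $O(\mu_0^2)$, which the paper leaves implicit.
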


\section{Binary system with long-range interaction}

In this section, we will apply the bifurcation analysis to some block copolymer system. Performing bifurcation analysis on such a system is challenging due to the nonlocal feature introduced by the long-range interaction terms. We will begin with a brief introduction on the background of block copolymer systems.

 Block copolymers are chain molecules made by several different segment species. It is called diblock copolymer system when it contains two species, say $A$ and $B$; it is called triblock copolymer system when it is made by three species, say $A$, $B$ and $C$. Generally we can have a block copolymer system with $N$ species. Due to the chemical incompatibility, different species tend to be phase-seperated. However, different species are connected by covalent chemical bonds, leading to the microphase separation.
 
 Block copolymers provide simple and easily controlled materials for the study of self-assembly. Mean field theory, which is associated with an free energy functional, have proven practically useful in understanding and predicting pattern morphology \cite{Bates_PhysToday1999, Choksi_NonlinearScience2001}. Ohta and Kawasaki proposed a model in \cite{OhtaKawasaki_Macromolecules1986}, which now we refer to Ohta-Kawasaki(OK) model, to study the phase separation of diblock copolymers. Several years later, Nakazawa and Ohta further proposed the Nakazawa-Ohta (NO) model \cite{OhtaNakazawa_Macromolecules1993} to explore the pattern formation of triblock copolymer system. In general, a mean field model of a block copolymer system with $N+1$ species $\{A_i\}_{i=1}^{N+1}$ with long-range interactions can be formulated as \cite{ChoiZhao_DCDSB2021}:
 \begin{align}\label{Energy_Ncomp}
	\begin{split}
		& E^N[\phi_1, \cdots, \phi_N] \\
		= & \int_{\Omega} \dfrac{\epsilon}{2} \sum_{\substack{i,j=1 \\ i \leq j}}^{N} \nabla \phi_i \cdot \nabla \phi_j + \dfrac{1}{2\epsilon} \left[ \sum_{i=1}^N W(\phi_i) + W \left( 1 - \sum_{i=1}^N \phi_i \right) \right] \dif x \\
		& + \sum_{i,j = 1}^N \frac{\gamma_{ij}}{2}  \int_{\Omega} \left[ \left(-\triangle\right)^{-\frac{1}{2}}  \left( \phi_{i} -\omega_{i}\right) \left(-\triangle\right)^{-\frac{1}{2}}\left( \phi_{j} -\omega_{j}\right) \right] \dif x.
	\end{split}
\end{align}
Here $0 < \epsilon \ll 1$ is an interface parameter, $\Omega = [-1,1]^d \subset \mathbb{R}^d, d=1, 2,3$, is the spatial domain. The function 
\begin{align}\label{Wfunc}
    W(\phi) = 18(\phi^2 - \phi)^2
\end{align}
is a double-well potential with two local minima at $0$ and $1$. Taking the sum of $W(\cdot)$ for all species, we introduce a potential function $W_N$:
\begin{align}\label{eqn:W_N}
	W_N(\phi_1, \cdots, \phi_N) = \frac{1}{2} \left[ \sum_{i=1}^N W(\phi_i) + W \left( 1 - \sum_{i=1}^N \phi_i \right) \right],
\end{align}
which has $N$ local minima at $(1,0,\cdots,0), (0,1,0,\cdots,0), \cdots, (0,\cdots,0,1)$. The first integral in (\ref{Energy_Ncomp}) accounts for the interfacial free energy between different species which is oscillation-inhibiting, therefore it favors large domains with minimal common area between species. The parameter $\gamma_{ij}$, assuming to be symmetric $\gamma_{ij} = \gamma_{ji}$, represents the strength of the long-range repulsive interaction between the $i$- and $j$-th species.  The parameter $\omega_i$, defined as
\begin{align*}
	\omega_i = \frac{1}{|\Omega|} \int_{\Omega} \phi_i \dif x, \quad  i = 1, \cdots, N
\end{align*}
represents the volume constraint for each species $\phi_i$.  The second term in (\ref{Energy_Ncomp}) is oscillation-forcing and therefore favors micro-domains of smaller size.

Note that when taking $N=1$ (respectively, $N=2$), the system (\ref{Energy_Ncomp}) reduces to the binary OK system (respectively, ternary NO system). In this work, we will focus on OK system, the binary system with a long-range interaction induced by $(-\Delta)^{-1}$:
\begin{align} \label{eqn:OK_energy}
E^{\text{OK}}[\phi] = \int_\Omega \left[ \frac{\epsilon}{2} | \nabla \phi|^2 + \frac{1}{\epsilon}W(\phi) \right] \dif x + \frac{\gamma}{2} \int_\Omega \left| (-\Delta)^{-\frac{1}{2}} \left( \phi - \omega \right) \right|^2 \dif x,	
\end{align}
with a volume constraint: 
\begin{align} \label{eqn:OK_VolConstraint}
 	 \int_\Omega \phi \dif x = \omega |\Omega|.
\end{align}
Here $\gamma$ measures the strength of the long-range repulsive force between bubbles. Fig \ref{figure:GammaEffect} shows the schematic of the $\gamma$ effect on the pattern of bubble assembly. The larger the value of $\gamma$ is, the more bubbles are formed at equilibrium.

In order to handle the volume constraint (\ref{eqn:OK_VolConstraint}),
 we introduce a penalty term to change (\ref{eqn:OK_energy}) into an unconstrained free energy functional:
\begin{align}\label{eqn:pOK_energy}
	\begin{split}
		E^{\text{pOK}}[\phi] =& \int_\Omega \left[ \frac{\epsilon}{2} | \nabla \phi|^2 + \frac{1}{\epsilon} W(\phi) \right] \dif x + \frac{\gamma}{2} \int_\Omega \left| (-\Delta)^{-\frac{1}{2}} \left( \phi - \omega \right) \right|^2 \dif x \\
		& + \frac{M}{2} \left( \int_\Omega \phi \dif x - \omega |\Omega| \right)^2,
	\end{split}
\end{align}
with $M\gg 1$ being the penalty constant for the volume constraint. Since we are interested in the pattern formation at equilibrium for OK system, the $L^2$ gradient flow dynamics for (\ref{eqn:pOK_energy}) is considered, leading us to the penalized Allen-Cahn-Ohta-Kawasaki (pACOK) equation for the time evolution of $\phi(x,t)$:
\begin{align}\label{eqn:pACOKss}
	\begin{split}
		\frac{\partial \phi}{\partial t} = - \frac{\delta E^{\text{pOK}} [\phi]}{\delta \phi} =& \epsilon \Delta \phi - \dfrac{1}{\epsilon} W'(\phi) - \gamma (-\Delta)^{-1} ( \phi - \omega ) \\
		& - M \left( \int_{\Omega} \phi \dif x - \omega |\Omega| \right),
	\end{split}
\end{align}
with a given initial condition $\phi(x,t=0) = \phi_0(x)$, and no flux boundary condition on $\partial\Omega$. The pACOK dynamics (\ref{eqn:pACOK}) satisfies the energy dissipative law
\begin{align*}
	\frac{d}{dt} E^{\text{pOK}}[\phi] = - \| \phi_t \|_{L^2}^2 \leq 0.
\end{align*}

There has been extensive work in the past decades to study OK model from both theoretical and numerical perspectives. In \cite{RenTruskinovsky_Elasticity2000,RenWei_SIAM2000}, the characterization of 1d global minimizers for OK system is established, and similar analysis is also applied to the system with $(I-\gamma^2\Delta)^{-1}$ type long-range interaction. Choski in \cite{Choksi_QAM2012} conducted asymptotic analysis for the global minimizers of OK model. Some variants of OK model are recently developed. For instance, Chan, Nejad and Wei in \cite{ChanNejadWei_PhysicaD2019} replaced the standard diffusion in OK model by a fractional diffusion, and the $\Gamma$-convergence and the existence of the global minimizers are proved. Recently there are also some work on the $L^2$ and $H^{-1}$ gradient flow dynamics for OK model. Joo, Xu and Zhao in \cite{JooXuZhao_IFB2021} studied the global well-posedness of the $L^2$ gradient flow dynamics for both OK and NO systems using the De Giorgi's minimizing movement scheme in which the volume constraints are handled via either Lagrange multiplier or penalty method. In \cite{WangRenZhao_CMS2019}, the authors provided numerical evidences that pACOK dynamics displays hexagonal bubble assembly. More importantly, as the first time, they numerically found that NO model forms a hexagonal double-bubble pattern at equilibrium when solving the penalized $L^2$ gradient flow for NO model. Recently efforts have been made to design numerical schemes for the $L^2$ gradient flow dynamics of OK model, such as first order operator-splitting energy stable methods \cite{XuZhao_JSC2019} and maximum principle preserving methods \cite{XuZhao_JSC2020}. Furthermore in \cite{ChoiZhao_DCDSB2021}, higher order energy stable schemes are designed to simulate the penalized $L^2$ gradient flow for OK/NO models. Additionally, $H^{-1}$ gradient flow dynamics is also considered recently. For instance, \cite{Benesova_SINA2014} studied an implicit midpoint spectral approximation for the equilibrium of OK model. \cite{ChengYangShen_JCP2017} adopts the IEQ method to study the diblock copolymer model.  Plus, theoretical studies of NO model is also attracting much attention in the past years. Ren and Wei studied a family of local minimizers with lamellar structure for the NO system in \cite{RenWei_JNS2003}, then they investigated the double bubble patterns of the triblock copolymer system in \cite{RenWei_ARMA2013,RenWei_ARMA2015}. However, the characterization of the (global) minimizers of NO system is still unclear. The global minimizers were only found in one-dimensional case for a degenerate case in \cite{GennipPeletier_CVPDE2008} in which the long-range interaction parameters $[\gamma_{ij}]$ was of some special form. A first attempt towards a systematic characterization of the global minimizers in non-degenerate case was conducted recently in \cite{XuDu_JNA2021}.

\begin{figure}
  \includegraphics[width=3in]{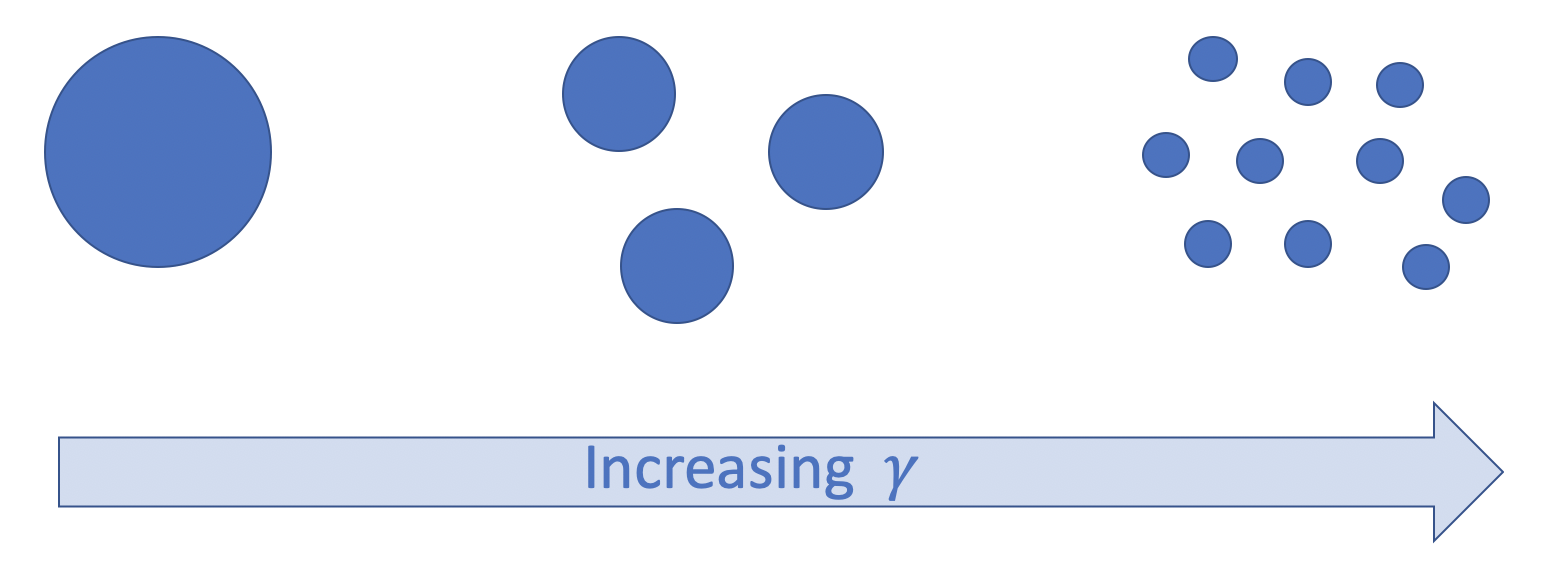}
  \caption{The schematic of $\gamma$-effect on solution patterns. Given a fixed relative volume $\omega$, the larger the long-range repulsive strength $\gamma$ is, the more bubbles the OK system displays at equalibrium.}\label{figure:GammaEffect}
\end{figure}

\subsection{Steady-state system and bifurcation analysis}

To perform bifurcation analysis to pACOK system (\ref{eqn:pACOKss}), some modification is needed. More specifically, we will add a term $(-\Delta)^{-1}(\phi - \bar{\phi})$ in the equation in replace of the terms involving the prefactor $\omega$. Then we consider the following steady-state system of (\ref{eqn:pACOKss}):
\begin{equation}\label{eqn4.1}
    \left\{
    \begin{aligned}
        &\epsilon \Delta \phi - \frac{1}{\epsilon} W'(\phi) - \gamma (-\Delta)^{-1}\left(\phi - \frac{1}{|\Omega|}\int_\Omega \phi \dif x\right) = 0 \hspace{2em} &&\text{in } \Omega,\\
        &\frac{\partial \phi}{\partial \bm{n}} = 0 \hspace{2em} &&\text{on } \partial \Omega.
    \end{aligned}
    \right.
\end{equation}
To solve this system, we introduce $u$ as 
\begin{equation}
    \label{eqn:u0}
    u=(-\Delta)^{-1}\left(\phi - \frac{1}{|\Omega|}\int_\Omega \phi \dif x\right),
\end{equation}
and impose the same Nenumann boundary condition for $u$. Therefore, the system for $u$ is:
\begin{equation}
    \label{eqn:u}
    \left\{
    \begin{aligned}
    &\Delta u = \frac{1}{|\Omega|}\int_\Omega \phi \dif x - \phi \triangleq f(x) \hspace{2em} &&\text{in }\Omega,\\
    &\frac{\partial u}{\partial \bm{n}} = 0 \hspace{2em} &&\text{on } \partial \Omega.
    \end{aligned}
    \right.
\end{equation}
We can solve $u$ from \re{eqn:u} by using the fundamental solution $G(x,y)$ of $\Delta u = 0$. To this end, we apply Green's second identity
\begin{equation}\label{eqn:u1}
\begin{split}
    u(x) &= \int_\Omega G(x,y)f(y)\dif y + \int_{\partial \Omega}\Big[ u(y)\frac{\p G(x,y)}{\p\bm{n}_y} - G(x,y)\frac{\partial u(y)}{\p \bm{n}_y}\Big]\dif S_y\\
    &= \int_\Omega G(x,y)f(y)\dif y + \int_{\partial \Omega} u(y)\frac{\p G(x,y)}{\p\bm{n}_y}\dif S_y,
    \end{split}
\end{equation}
In 1D, $G(x,y)=\frac12 |x-y|$. In addition, if we take $\Omega =[-1,1]$, then $\p\Omega = \{-1,1\}$, and $$\frac{\p G(x,y)}{\p\bm{n}_y}\bigg|_{y=1} = \frac{\p(\frac12(y-x))}{\p y}\cdot 1 = \frac12,\hspace{2em} \frac{\p G(x,y)}{\p\bm{n}_y}\bigg|_{y=-1} = \frac{\p(\frac12(x-y))}{\p y}\cdot (-1) = \frac12.$$
Substituting this result back into \re{eqn:u1}, we have
\begin{equation}
    \label{eqn:u2}
    u(x) = \int_\Omega G(x,y)f(y)\dif y + \frac12 \int_{\p \Omega} u(y)\dif S_y.
\end{equation}
The solution $u$ to the system \re{eqn:u} is unique up to a constant addition. We assume $u$ is of zero mean, $\int_\Omega u(x)\dif x=0$. Then
%It is clear that the solution $u$ to the system \re{eqn:u} is not unique; in fact, if $u$ is a solution to \re{eqn:u}, $u+C$ is also a solution, where $C$ is a constant. To guarantee the uniqueness of $u$, we need to add one more condition for $u$. We assume that $u$ has zero mean, that is to say, $\int_\Omega u(x)\dif x=0$, hence
\begin{equation}
    \label{eqn:u3}
    \int_{\p \Omega} u(y)\dif S_y = -\frac{2}{|\Omega|} \int_\Omega \int_\Omega G(x,y)f(y)\dif y \dif x.
\end{equation}
Together with \re{eqn:u1} and \re{eqn:u2}, the solution $u$ to the system \re{eqn:u} with zero mean is
\begin{equation}\label{eqn:u4}
\begin{split}
    u(x) &= \int_\Omega G(x,y)f(y)\dif y - \frac{2}{|\Omega|}\int_\Omega \int_\Omega G(x,y)f(y)\dif y \dif x\\
    &= \int_\Omega G(x,y)\Big(\frac{1}{|\Omega|}\int_\Omega \phi \dif x - \phi(y)\Big)\dif y - \frac{2}{|\Omega|}\int_\Omega \int_\Omega G(x,y)\Big(\int_\Omega \phi \dif x - \phi(y)\Big)\dif y \dif x\\
    &= \Big(\frac{1}{|\Omega|}\int_\Omega \phi \dif x\Big) H(x)
    + \int_\Omega G(x,y)\phi(y)\dif y - \frac{2}{|\Omega|}\int_\Omega \int_\Omega G(x,y)\phi(y)\dif y \dif x,
    \end{split}
\end{equation}
where $H(x) = \int_\Omega G(x,y)\dif y - \frac{2}{|\Omega|}\int_\Omega \int_\Omega G(x,y)\dif y\dif x$. 
\begin{comment}
In 1D, the terms in $H(x)$ can be computed explicitly as:
\begin{eqnarray}
&&\int_\Omega G(x,y)\dif y = \int_{-1}^1 \frac12|x-y|\dif y = \int_{-1}^x \frac12 (x-y)\dif y + \int_{x}^1 \frac12 (y-x)\dif y = \frac12 (x^2+1),\label{cal1}\\
&&H(x) = \frac12(x^2+1)-\int_{-1}^1\frac12(x^2+1) = \frac12 x^2 -\frac56.\label{cal2}
\end{eqnarray}
These calculations will be used later.
\end{comment}

Substituting \re{eqn:u0} and \re{eqn:u4} into the system \re{eqn4.1}, it yields an equivalent system:
\begin{equation}\label{eqn:pACOK1}
\left\{
\begin{aligned}
    &\epsilon\Delta \phi - \frac{1}{\epsilon}W'(\phi) - \gamma\Big(\frac{1}{|\Omega|}\int_\Omega\phi \dif x\Big)H(x) + \gamma\int_\Omega G(x,y)\phi(y)\dif y && \\
    &\hspace{12em}- \frac{2\gamma}{|\Omega|}\int_\Omega\int_\Omega G(x,y)\phi(y)\dif y\dif x = 0\hspace{2em} &&\text{in }\Omega,\\
    &\frac{\p \phi}{\p \bm{n}} = 0 &&\text{on }\p \Omega,
\end{aligned}
	\right.
\end{equation}
We will analyze the bifurcation solutions based on the above system. 

We first consider the trivial steady-state solutions. If $\phi\equiv C$, where $C$ is a constant, then $f(x)=0$ in \re{eqn:u}, and hence $u(x)=0$. Therefore, the trivial steady-state solutions to \re{eqn:pACOK1} (or equivalently \re{eqn4.1}) are:
\begin{align}\label{eqn:pACOK}
	\begin{split}
		W'(\phi_0)  =&0\Rightarrow \phi_0=1, \phi_0=\frac{1}{2},\hbox{~and~}\phi_0=0.  
	\end{split}
\end{align}
Similar as in \re{eqn:shiftedSteadyAllenCahn}, we first need to shift the solution by $-\phi_0$ ($\phi_0 =0,1,\frac12$) in order to apply Crandall-Rabinowitz Theorem (Theorem \ref{CRthm}) on any trivial steady states. As a result, we obtain the following system:
\begin{equation}\label{eqn:pACOK2}
\left\{
\begin{aligned}
    &\epsilon\Delta \phi - \frac{1}{\epsilon}W'(\phi+\phi_0) - \gamma\Big(\frac{1}{|\Omega|}\int_\Omega\phi \dif x\Big)H(x) + \gamma\int_\Omega G(x,y)\phi(y)\dif y && \\
    &\hspace{12em}- \frac{2\gamma}{|\Omega|}\int_\Omega\int_\Omega G(x,y)\phi(y)\dif y\dif x = 0\hspace{2em} &&\text{in }\Omega,\\
    &\frac{\p \phi}{\p \bm{n}} = 0 &&\text{on }\p \Omega,
\end{aligned}
	\right.
\end{equation}
Notice that in deriving the above system, we made use of the fact that $u(x)=0$ if $\phi=\phi_0$ is a constant, hence,
\begin{equation}\label{equal}
    \Big(\frac{1}{|\Omega|}\int_\Omega \phi_0\dif x\Big)H(x) + \int_\Omega G(x,y)\phi_0 \dif y - \frac{2}{|\Omega|}\int_\Omega\int_\Omega G(x,y)\phi_0 \dif y \dif x = 0.
\end{equation}
After being shifted, $\phi=0$ is always a solution to the system \re{eqn:pACOK2} when $\phi_0=0,1,$ or $\frac12$.

Next, we shall use Crandall-Rabinowitz Theorem to find bifurcation points on the solution branches of constant steady state solutions. To this end, we define the following operator:
\begin{equation}
\begin{split}
    \label{ACOK-G}
    \mathcal{G}(\phi,\gamma) =&\; \epsilon \Delta \phi - \frac{1}{\epsilon} W'(\phi+\phi_0) - \frac{\gamma H(x)}{|\Omega|}\int_\Omega \phi \dif x + \gamma \int_\Omega G(x,y)\phi(y)\dif y \\
    &- \frac{2\gamma}{|\Omega|}\int_\Omega\int_\Omega G(x,y)\phi(y)\dif y\dif x.
    \end{split}
\end{equation}
In this case, $\gamma$ is viewed as the bifurcation parameter. Since $\mathcal{G}$ involves at most second-order derivatives of $\phi$, $\mathcal{G}(\cdot,\gamma)$ maps $X$ into $Y$, where the spaces $X$ and $Y$ are defined in \re{spaces}. As in Lemma \ref{lem1.1}, we need to compute the Fr\'echet derivative of $\mathcal{G}$.

\begin{lem}\label{lem3.1}
The Fr\'echet derivative $D_\phi \mathcal{G}(\phi,\gamma)$ of the operator $\mathcal{G}$ is given by
\begin{equation}
    \label{ACOK:freD}
    \begin{split}
   D_\phi \mathcal{G}(\phi,\gamma)[\xi] =&\; \epsilon \Delta \xi - \frac{1}{\epsilon} W''(\phi+\phi_0)\xi - \frac{\gamma H(x)}{|\Omega|}\int_\Omega \xi \dif x + \gamma \int_\Omega G(x,y)\xi(y)\dif y \\
    &- \frac{2\gamma}{|\Omega|}\int_\Omega\int_\Omega G(x,y)\xi(y)\dif y\dif x.
    \end{split}
\end{equation}
\end{lem}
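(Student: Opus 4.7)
The plan is to mirror the argument of Lemma \ref{lem1.1} exactly, exploiting that $\mathcal{G}(\phi,\gamma)$ naturally decomposes into (a) a collection of terms that are linear in $\phi$, namely $\epsilon\Delta\phi$, $-\frac{\gamma H(x)}{|\Omega|}\int_\Omega\phi\dif x$, $\gamma\int_\Omega G(x,y)\phi(y)\dif y$, and the double-integral term, together with (b) the single genuinely nonlinear contribution $-\frac{1}{\epsilon}W'(\phi+\phi_0)$. First I would form the difference $\mathcal{G}(\phi+\xi,\gamma)-\mathcal{G}(\phi,\gamma)-D_\phi\mathcal{G}(\phi,\gamma)[\xi]$ term by term. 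Each of the four linear-in-$\phi$ pieces contributes its own identical expression in $\xi$ to the candidate derivative in \re{ACOK:freD}, so these cancel exactly in the difference, and the entire computation reduces to the $W'$ contribution.

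Since the double-well $W(\phi)=18(\phi^2-\phi)^2$ is a quartic polynomial, $W'$ is cubic, and Taylor's theorem with explicit remainder gives
$$W'(\phi+\phi_0+\xi)-W'(\phi+\phi_0)-W''(\phi+\phi_0)\xi=\tfrac{1}{2}W'''(\phi+\phi_0)\xi^2+\tfrac{1}{6}W^{(4)}\xi^3,$$
which is a polynomial expression of the form $\xi^2\,P(\phi+\phi_0,\xi)$ with $P$ polynomial in its arguments. Hence the entire remainder is $-\frac{1}{\epsilon}\xi^2 P(\phi+\phi_0,\xi)$. In the H\"older norm $\|\cdot\|_Y=\|\cdot\|_{X^{l+\alpha}}$, I would bound this factor using the Banach-algebra property of $C^{l+\alpha}[-1,1]$: pointwise products of $C^{l+\alpha}$ functions are again $C^{l+\alpha}$, with an inequality of the form $\|fg\|_{C^{l+\alpha}}\le C\|f\|_{C^{l+\alpha}}\|g\|_{C^{l+\alpha}}$. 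Together with the continuous embedding $X=X^{l+2+\alpha}\hookrightarrow Y$, this yields
$$\bigl\|\mathcal{G}(\phi+\xi,\gamma)-\mathcal{G}(\phi,\gamma)-D_\phi\mathcal{G}(\phi,\gamma)[\xi]\bigr\|_Y \le C(\phi,\epsilon)\,\|\xi\|_X^2,$$
with $C(\phi,\epsilon)$ locally bounded in $\phi$. Dividing by $\|\xi\|_X$ and letting $\|\xi\|_X\to 0$ then gives the defining Fr\'echet property.

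The remaining bookkeeping is to check that the operator on the right-hand side of \re{ACOK:freD} is continuous from $X$ into $Y$. This is immediate: $\Delta$ loses exactly two derivatives and thus maps $X^{l+2+\alpha}$ continuously into $X^{l+\alpha}$; the multiplication operator $\xi\mapsto W''(\phi+\phi_0)\xi$ is bounded on $Y$ since $W''(\phi+\phi_0)\in C^{l+\alpha}[-1,1]$; and each of the three nonlocal terms is a smoothing integral operator against the continuous kernel $G(x,y)=\frac{1}{2}|x-y|$ (or its averaged versions), hence maps $Y$ continuously into itself and in particular $X$ into $Y$. The main obstacle, if any, is purely notational: keeping careful track of which variable $x$ or $y$ is being integrated in the double-integral term when one expands $\mathcal{G}(\phi+\xi,\gamma)-\mathcal{G}(\phi,\gamma)$, but since all of the nonlocal pieces are linear in the argument they drop out of the remainder entirely, leaving only the pointwise polynomial estimate for $W'$ as the substantive step.
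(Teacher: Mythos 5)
Your proposal is correct and follows essentially the same route as the paper: split $\mathcal{G}$ into its linear (local and nonlocal) part, which contributes nothing to the remainder, plus the $W'$ nonlinearity, whose second-order Taylor remainder is quadratic in $\xi$ and hence $o(\|\xi\|_X)$ after dividing. The only difference is cosmetic: the paper invokes the mean value theorem to write the remainder as $W'''(\psi)\xi^2$ with an intermediate $\psi$, whereas you use the exact (terminating) polynomial expansion of the quartic $W$, which is marginally cleaner but not a different argument.
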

\begin{proof}
Let $\phi,\xi\in X$. Since the operator $\mathcal{G}_1$ defined as follows
\begin{equation*}
        \mathcal{G}_1(\phi,\gamma) \triangleq\; \epsilon \Delta \xi  - \frac{\gamma H(x)}{|\Omega|}\int_\Omega \phi \dif x + \gamma \int_\Omega G(x,y)\phi(y)\dif y - \frac{2\gamma}{|\Omega|}\int_\Omega\int_\Omega G(x,y)\phi(y)\dif y\dif x,
\end{equation*}
is a linear operator with respect to $\phi$, we have $\mathcal{G}_1(\phi+\xi,\gamma) = \mathcal{G}_1(\phi,\gamma)+\mathcal{G}_1(\xi,\gamma)$. It follows from the Mean Value Theorem that
\begin{equation*}
\begin{split}
    \mathcal{G}(\phi+\xi,\gamma) - \mathcal{G}(\phi,\gamma)  &= \Big[\mathcal{G}_1(\phi+\xi,\gamma) - \frac{1}{\epsilon}W'(\phi+\xi+\phi_0)\Big] -\Big[ \mathcal{G}_1(\phi,\gamma) - \frac{1}{\epsilon}W'(\phi+\phi_0)\Big] \\
    &= \mathcal{G}_1(\xi,\gamma) - \frac{1}{\epsilon}\Big[W'(\phi+\xi+\phi_0) - W'(\phi+\phi_0) \Big]\\
    &= \mathcal{G}_1(\xi,\gamma) - \frac{1}{\epsilon}\Big[W''(\phi+\phi_0)\xi + W'''(\psi)\xi^2 \Big],
    \end{split}
\end{equation*}
where $\psi \in (\phi+\phi_0,\phi+\xi+\phi_0)$. Combining with \re{ACOK:freD}, we have
\begin{equation*}
    \mathcal{G}(\phi+\xi,\gamma) - \mathcal{G}(\phi,\gamma) - D_\phi \mathcal{G}(\phi,\gamma)[\xi] = -\frac{1}{\epsilon}W'''(\psi)\xi^2.
\end{equation*}
Therefore, we find that
\begin{equation*}
    \frac{\|\mathcal{G}(\phi+\xi,\gamma) - \mathcal{G}(\phi,\gamma) - D_\phi \mathcal{G}(\phi,\gamma)[\xi]\|_Y}{\|\xi\|_X} = \frac{1}{\epsilon}\frac{\|W'''(\psi)\xi^2\|_Y}{\|\xi\|_X} \le \frac{C}{\epsilon}\frac{\|\xi\|_X^2}{\|\xi\|_X} \rightarrow 0,
\end{equation*}
as $\|\xi\|_X\rightarrow 0$, which leads to the Fr\'echet derivative in \re{ACOK:freD}.
\end{proof}

Based on Lemma \ref{lem3.1} and equation \re{ACOK:freD}, we have
\begin{equation}
    \label{ACOK:freD0}
    \begin{split}
    D_\phi \mathcal{G}(0,\gamma)[\xi] =&\; \epsilon \Delta \xi - \frac{1}{\epsilon} W''(\phi_0)\xi - \frac{\gamma H(x)}{|\Omega|}\int_\Omega \xi \dif x + \gamma \int_\Omega G(x,y)\xi(y)\dif y \\
    &- \frac{2\gamma}{|\Omega|}\int_\Omega\int_\Omega G(x,y)\xi(y)\dif y\dif x,
    \end{split}
\end{equation}
where $W''(\phi_0) = 36(6\phi_0^2-6\phi_0+1)$. In what follows, we shall consider the bifurcations according to different values of $\phi_0$. 

\subsubsection{Bifurcations around  $\phi_0=\frac12$}
When $\phi_0=\frac12$, $W''(\phi_0)=-18$, it follows that,
\begin{equation}
    \label{case1}
    \begin{split}
    D_\phi \mathcal{G}(0,\gamma)[\xi] =&\; \epsilon \Delta \xi +\frac{18}{\epsilon}\xi - \frac{\gamma H(x)}{|\Omega|}\int_\Omega \xi \dif x + \gamma \int_\Omega G(x,y)\xi(y)\dif y \\
    &- \frac{2\gamma}{|\Omega|}\int_\Omega\int_\Omega G(x,y)\xi(y)\dif y\dif x.
    \end{split}
\end{equation}
Recall that $\Omega=[-1,1]$. Hence, if $\xi\in X$ is in the kernel of $D_\phi \mathcal{G}(0,\gamma)$, it is equivalent to
\begin{equation*}
    \begin{cases}
        &\epsilon\xi_{xx} + \frac{18}{\epsilon}\xi -\frac{\gamma H(x)}{2}\int_{-1}^1 \xi(x)\dif x + \gamma \int_{-1}^1 G(x,y)\xi(y)\dif y - \gamma\int_{-1}^1\int_{-1}^1 G(x,y)\xi(y)\dif y \dif x = 0,\\
        &\xi_x(-1) = \xi_x(1) = 0.
    \end{cases}
\end{equation*}
Like in the previous analysis, we once again use the ansatz \re{eigen} to derive two solution expressions, \re{sol1} and \re{sol2}. Substituting each term of \re{sol1} into \re{case1} leads to
\begin{align}\label{a0}
    D_{\phi} \mathcal{G}(0,\gamma)[a_0] = \frac{18}{\epsilon}a_0, 
\end{align}
where we use \re{equal} to cancel the last three terms; in addition, recalling that $G(x,y)=\frac12|x-y|$ in 1D, we have
\begin{equation*}
\begin{split}
    &D_\phi \mathcal{G}(0,\gamma)\left[b_n\sin\left(\left(\frac{\pi}{2}+n\pi\right)x \right)\right] =\\
    &\hspace{2em}\left[-\epsilon\left(\frac{\pi}{2}+n\pi\right)^2 +\frac{18}{\epsilon}\right] b_n \sin\left(\left(\frac{\pi}{2}+n\pi\right)x \right)- \frac{\gamma H(x)b_n}{2}\int_{-1}^1 \sin\left(\left(\frac{\pi}{2}+n\pi\right)x \right)\dif x\\
    &\hspace{2em}+ \gamma b_n\int_{-1}^1 \frac12 |x-y|\sin\left(\left(\frac{\pi}{2}+n\pi\right)y \right)\dif y -\gamma b_n\int_{-1}^1\int_{-1}^1 \frac12|x-y|\sin\left(\left(\frac{\pi}{2}+n\pi\right)y\right)\dif y \dif x,
    \end{split}
\end{equation*}
where
\begin{equation*}
    \begin{split}
        &\quad\ \int_{-1}^1   |x-y|\sin\left(\left(\frac{\pi}{2}+n\pi\right)y\right)\dif y\\
        &= \int_{-1}^x (x-y)\sin\left(\left(\frac{\pi}{2}+n\pi\right)y \right)\dif y + \int_{x}^1 (y-x)\sin\big((\frac{\pi}{2}+n\pi)y \big)\dif y\\
        &= -\frac{2}{(\frac{\pi}2+n\pi)^2}\sin\left(\left(\frac{\pi}{2}+n\pi\right)x \right) + \frac{2\cos\big(\frac{\pi}{2}+n\pi \big)}{\frac{\pi}2+n\pi}x\\
        &= -\frac{2}{(\frac{\pi}2+n\pi)^2}\sin\left(\left(\frac{\pi}{2}+n\pi\right)x \right) + 0  = -\frac{2}{(\frac{\pi}2+n\pi)^2}\sin\left(\left(\frac{\pi}{2}+n\pi\right)x \right).
    \end{split}
\end{equation*}
The last equality holds since $n$ is an integer, then $\cos\left(\frac{\pi}{2}+n\pi \right)=0$. Integrating one more time leads to
\begin{equation*}
\begin{split}
    \int_{-1}^1\int_{-1}^1 |x-y|\sin\left(\left(\frac{\pi}{2}+n\pi\right)y \right)\dif y \dif x&= -\frac{2}{(\frac{\pi}2+n\pi)^2}\int_{-1}^1 \sin\left(\left(\frac{\pi}{2}+n\pi\right)x \right) \dif x  = 0.
    \end{split}
\end{equation*}
Hence,
\begin{equation}
    \label{bn}
    D_\phi \mathcal{G}(0,\gamma)\left[b_n\sin\left(\left(\frac{\pi}{2}+n\pi\right)x \right)\right] = \Big[-\epsilon\left(\frac{\pi}{2}+n\pi\right)^2 +\frac{18}{\epsilon} - \frac{\gamma}{(\frac{\pi}2+n\pi)^2}\Big] b_n \sin\left(\left(\frac{\pi}{2}+n\pi\right)x \right).
\end{equation}
Combining the above equation with \re{a0}, we obtain
\begin{equation}\label{case1:eq1}
    D_\phi \mathcal{G}(0,\gamma)[\xi] = \frac{18 a_0}{\epsilon} + \sum_{n=0}^\infty\Big[-\epsilon\left(\frac{\pi}{2}+n\pi\right)^2 +\frac{18}{\epsilon} -\frac{\gamma}{(\frac{\pi}{2}+n\pi)^2}\Big]b_n\sin\left(\left(\frac{\pi}{2}+n\pi\right)x \right).
\end{equation}
If $-\epsilon(\frac{\pi}{2}+n\pi)^2 +\frac{18}{\epsilon} -\frac{\gamma}{(\frac{\pi}{2}+n\pi)^2}=0$, which is equivalent to $\gamma = -\epsilon(\frac{\pi}{2}+n\pi)^4+\frac{18}{\epsilon}(\frac{\pi}{2}+n\pi)^2$, then the term of $\sin\big((\frac{\pi}{2}+n\pi)x \big)$ disappears while all the other terms remain. Therefore, we have the following bifurcation theorem for the system \re{eqn:pACOK2}:

\begin{theorem}\label{ACOK:thm1}
For each integer $n\ge 0$, $\gamma^{(1)}_n = -\epsilon(\frac{\pi}{2}+n\pi)^4+\frac{18}{\epsilon}(\frac{\pi}{2}+n\pi)^2$, $\left(0,\gamma^{(1)}_n\right)$ is a bifurcation point to the system \re{eqn:pACOK2} such that there is a bifurcation solution $(\phi_n(x,s),\gamma_n(s))$ with
$$
\gamma_n(0)=\gamma_n^{(1)},\hspace{2em} \phi_n(x,s)=s\sin\left(\left(\frac{\pi}{2}+n\pi\right)x\right)+o(s),\hspace{2em}\text{where $|s|\ll1$.}
$$
\end{theorem}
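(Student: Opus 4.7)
The plan is to apply the Crandall--Rabinowitz theorem (Theorem \ref{CRthm}) in direct parallel with the proof of Theorem \ref{bif1}, taking $\gamma$ as the bifurcation parameter, $\mu_0 = \gamma_n^{(1)}$, and $x_0 = \sin((\pi/2+n\pi)x)$. The heavy lifting has already been done in \re{case1:eq1}: it gives the full Fourier decomposition of $D_\phi\mathcal{G}(0,\gamma)$ restricted to the constant\,$+$\,sine subspace generated by \re{sol1}, and this is the engine of the proof.

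For Condition~(I) I would observe that $W'(1/2)=0$ (since $\phi_0=1/2$ is a critical point of $W$) and that the $\gamma$-dependent nonlocal terms collapse via \re{equal}, so $\mathcal{G}(0,\gamma)=0$ identically in $\gamma$. Conditions~(II) and (III) follow by reading off \re{case1:eq1} at $\gamma=\gamma_n^{(1)}$: the coefficient of $\sin((\pi/2+n\pi)x)$ vanishes exactly, while the constant coefficient $18/\epsilon$ and every other sine coefficient $-\epsilon(\pi/2+k\pi)^2+18/\epsilon-\gamma_n^{(1)}/(\pi/2+k\pi)^2$ with $k\ne n$ remain nonzero under a generic non-resonance assumption on $\epsilon$; a brief factorization shows that simultaneous vanishing forces $(\pi/2+n\pi)^2+(\pi/2+k\pi)^2=18/\epsilon^2$, which fails off a discrete set of $\epsilon$. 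To lift one-dimensionality of the kernel from the sine\,$+$\,constant subspace to all of $X$, I would redo the computation on the cosine modes: integration by parts gives $\int_{-1}^1 G(x,y)\cos(k\pi y)\dif y=[-\cos(k\pi x)+(-1)^k]/(k\pi)^2$, so $D_\phi\mathcal{G}(0,\gamma)$ is upper triangular on the constant\,$+$\,cosine subspace with diagonal entries of the same algebraic form, and the same non-resonance condition keeps cosines out of the kernel.

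For Condition~(IV), differentiating $D_\phi\mathcal{G}(0,\gamma)$ with respect to $\gamma$ annihilates every $\epsilon$- and $W$-dependent term and leaves only the three nonlocal summands. Applied to $\sin((\pi/2+n\pi)x)$, the integrals $\int_{-1}^1\sin((\pi/2+n\pi)x)\dif x$ and $\int_{-1}^1\int_{-1}^1 G(x,y)\sin((\pi/2+n\pi)y)\dif y\,\dif x$ both vanish --- the first by direct integration, the second because the inner integral is already shown in the derivation of \re{bn} to be a scalar multiple of $\sin((\pi/2+n\pi)x)$ whose mean on $[-1,1]$ is zero. Hence $D_{\gamma\phi}\mathcal{G}(0,\gamma_n^{(1)})[\sin((\pi/2+n\pi)x)]=-(\pi/2+n\pi)^{-2}\sin((\pi/2+n\pi)x)$, a nonzero multiple of the kernel element and therefore outside the image.

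The main obstacle is Condition~(II): the diagonalization in \re{case1:eq1} only dispatches the sine\,$+$\,constant subspace, and one still has to confirm that no cosine combination can lie in the kernel at $\gamma=\gamma_n^{(1)}$. This is the one place where the argument departs from the Allen--Cahn template of Theorem \ref{bif1}, because the nonlocality couples each $\cos(k\pi x)$ to a constant through $H(x)$; the parallel upper-triangular structure on the cosine side, together with the same non-resonance condition on $\epsilon$, is what closes the gap and makes the Crandall--Rabinowitz machinery apply verbatim.
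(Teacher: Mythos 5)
Your proposal follows the same route as the paper's proof: verify the four Crandall--Rabinowitz conditions by reading off the Fourier diagonalization \re{case1:eq1} at $\gamma=\gamma_n^{(1)}$, with Condition (IV) reduced to the computation $D_{\gamma\phi}\mathcal{G}(0,\gamma_n^{(1)})\left[\sin\left(\left(\frac{\pi}{2}+n\pi\right)x\right)\right]=-\left(\frac{\pi}{2}+n\pi\right)^{-2}\sin\left(\left(\frac{\pi}{2}+n\pi\right)x\right)$, which lies outside the image. Your two additional checks --- that no other sine coefficient vanishes at the same $\gamma$ (which would force $\left(\frac{\pi}{2}+n\pi\right)^2+\left(\frac{\pi}{2}+k\pi\right)^2=18/\epsilon^2$) and that no cosine mode enters the kernel at $\gamma_n^{(1)}$ --- address non-resonance conditions that the paper's proof silently assumes when asserting that ``all the other terms remain,'' so they strengthen rather than diverge from the published argument.
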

\begin{proof}
We need to verify the four conditions in Crandall-Rabinowitz Theorem at the point $(0,\gamma^{(1)}_n)$. To begin with, the first condition is naturally satisfied, since $\phi=0$ is always a solution to \re{eqn:pACOK2}. When $\gamma = \gamma^{(1)}_n = -\epsilon(\frac{\pi}{2}+n\pi)^4+\frac{18}{\epsilon}(\frac{\pi}{2}+n\pi)^2$, it follows from \re{case1:eq1} that
\begin{equation*}
    \text{Im}\, D_\phi \mathcal{G}(0,\gamma_n^{(1)}) = \text{span}\Big\{1, \sin\big(\frac{\pi}{2}x\big), \sin\big(\frac{3\pi}{2}x\big),\cdots,\sin\big((\frac{\pi}2 +(n-1))x\big), \sin\big((\frac{\pi}2 +(n+1))x\big),\cdots\Big\},
\end{equation*}
and
\begin{equation*}
    \text{Ker}\,D_\phi \mathcal{G}(0,\gamma_n^{(1)}) = \text{span}\Big\{\sin\big((\frac{\pi}{2}+n\pi)x\big) \Big\},
\end{equation*}
which imply that  $\text{dim}(\text{Ker}\,D_\phi \mathcal{G}(0,\gamma_n^{(1)})) = 1$ and $\text{codim}(\text{Im}\, D_\phi \mathcal{G}(0,\gamma_n^{(1)}))=1$. Hence, the codimensional space and the non-tangential space meet the requirements of Crandall-Rabinowitz Theorem. To complete the proof, it remains to show the last condition. Differentiating \re{ACOK:freD0} with respect to $\gamma$ and applying on $\sin\big((\frac{\pi}{2}+n\pi)x\big)$, we have
\begin{equation}
    D_{\gamma \phi} \mathcal{G}(0,\gamma^{(1)}_n)\left[\sin\left(\left(\frac{\pi}{2}+n\pi\right)x\right)\right] = -\frac{1}{(\frac{\pi}2+n\pi)^2}\sin\left(\left(\frac{\pi}{2}+n\pi\right)x\right) \notin \text{Im}\, D_\phi \mathcal{G}(0,\gamma_n^{(1)}).
\end{equation}

All conditions of Crandall-Rabinowitz Theorem are satisfied, hence the results in Theorem \re{ACOK:thm1} are direct consequences of Theorem \ref{CRthm}.
\end{proof}

Since the solutions to the original model \re{eqn:pACOK} are shifted, we have the following bifurcation theorem for the original model:
\begin{theorem}\label{ACOK:thm2}
For each integer $n\ge 0$, $\gamma^{(1)}_n = -\epsilon(\frac{\pi}{2}+n\pi)^4+\frac{18}{\epsilon}(\frac{\pi}{2}+n\pi)^2$, $\left(\frac12,\gamma^{(1)}_n\right)$ is a bifurcation point to the system \re{eqn:pACOK} such that there is a bifurcation solution $(\phi_n(x,s),\gamma_n(s))$ with
$$
\gamma_n(0)=\gamma_n^{(1)},\hspace{2em} \phi_n(x,s)=\frac12+ s\sin\left(\left(\frac{\pi}{2}+n\pi\right)x\right)+o(s),\hspace{2em}\text{where $|s|\ll1$.}
$$
\end{theorem}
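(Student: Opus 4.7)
The plan is to obtain Theorem \ref{ACOK:thm2} as a direct corollary of Theorem \ref{ACOK:thm1} by undoing the constant shift that was performed between the original system \re{eqn:pACOK} and the shifted system \re{eqn:pACOK2}. Recall that \re{eqn:pACOK2} was derived from \re{eqn4.1} (the steady state form of \re{eqn:pACOK}) by substituting $\phi \mapsto \phi + \phi_0$ with the constant trivial state $\phi_0 = \tfrac{1}{2}$, and by exploiting identity \re{equal} which ensures that the nonlocal terms evaluated on a constant vanish. Consequently, $\widetilde{\phi}$ solves \re{eqn:pACOK2} if and only if $\phi = \widetilde{\phi} + \tfrac12$ solves the original system \re{eqn:pACOK}.

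First I would state explicitly the bijection between the solution sets of \re{eqn:pACOK2} and \re{eqn:pACOK} induced by the translation $\mathcal{T}\colon \phi \mapsto \phi + \tfrac12$. This bijection is affine in the $\phi$-variable and the identity in the parameter $\gamma$, hence it is a $C^\infty$-diffeomorphism of the ambient Banach space $X = X^{l+2+\alpha}$ (the boundary condition $\phi_x(\pm 1)=0$ is preserved because we are shifting by a constant, whose derivative is zero). In particular, $\mathcal{T}$ sends curves of solutions to curves of solutions and preserves the local bifurcation structure, including the tangent direction at the bifurcation point.

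Next I would apply this transport to the conclusion of Theorem \ref{ACOK:thm1}. That theorem yields, near $(0,\gamma_n^{(1)})$, two $C^{p-2}$ curves of solutions to \re{eqn:pACOK2}: the trivial branch $\{(0,\gamma)\}$ and the non-trivial branch $(\widetilde{\phi}_n(x,s), \gamma_n(s))$ with $\gamma_n(0) = \gamma_n^{(1)}$ and $\widetilde{\phi}_n(x,s) = s \sin((\tfrac{\pi}{2}+n\pi)x) + o(s)$. Applying $\mathcal{T}$ yields the trivial branch $\{(\tfrac12,\gamma)\}$ together with the non-trivial branch
\[
\phi_n(x,s) = \tfrac12 + s \sin\!\left(\left(\tfrac{\pi}{2}+n\pi\right)x\right) + o(s), \qquad \gamma_n(0)=\gamma_n^{(1)},
\]
as claimed.

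There is no serious obstacle here; the only point that needs a brief check is that the translation $\mathcal{T}$ truly sends solutions of \re{eqn:pACOK2} to solutions of \re{eqn:pACOK}, which is exactly the content of identity \re{equal} combined with the fact that $W'(\widetilde{\phi}+\tfrac12)$ in the shifted equation equals $W'(\phi)$ in the original one. Once that equivalence is in place, Theorem \ref{ACOK:thm2} is immediate from Theorem \ref{ACOK:thm1}.
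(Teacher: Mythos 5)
Your proposal is correct and is essentially the paper's own argument: the paper states Theorem \ref{ACOK:thm2} as an immediate consequence of Theorem \ref{ACOK:thm1} by undoing the constant shift $\phi \mapsto \phi + \tfrac12$, which is exactly the translation $\mathcal{T}$ you describe. You simply spell out in more detail the bijection between solution sets (via identity \re{equal} and the invariance of the nonlocal terms under constant shifts) that the paper leaves implicit.
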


Similarly, considering solution \re{sol2}, we have
\begin{equation}\label{case1:eq2}
    D_\phi \mathcal{G}(0,\gamma)[\xi] = D_\phi \mathcal{G}(0,\gamma)\left[\sum_{n=0}^\infty a_n\cos(n\pi x)\right] = \frac{18 a_0}{\epsilon} + \sum_{n=1}^\infty\Big[-\epsilon(n\pi)^2 +\frac{18}{\epsilon} -\frac{\gamma}{(n\pi)^2}\Big]a_n\cos(n\pi x),
\end{equation}
then we obtain the bifurcation theorem for cosine modes:
\begin{theorem}\label{ACOK:thm3}
For each integer $n\ge 1$, $\gamma^{(2)}_n = -\epsilon(n\pi)^4+\frac{18}{\epsilon}(n\pi)^2$, $\left(\frac12,\gamma^{(2)}_n\right)$ is a bifurcation point to the system \re{eqn:pACOK} such that there is a bifurcation solution $(\phi_n(x,s),\gamma_n(s))$ with
$$\gamma_n(0)=\gamma_n^{(2)},\hspace{2em} \phi_n(x,s)=\frac12+ s\cos(n\pi x)+o(s),\hspace{2em}\text{where $|s|\ll1$.}$$
\end{theorem}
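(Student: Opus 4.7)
The plan is to mirror the proof of Theorem~\ref{ACOK:thm1}/\ref{ACOK:thm2} verbatim, but with the cosine ansatz \re{sol2} in place of the sine ansatz \re{sol1}. I would work on the shifted system \re{eqn:pACOK2} at the base constant $\phi_0=\tfrac12$ and verify the four hypotheses of the Crandall--Rabinowitz Theorem (Theorem~\ref{CRthm}) at $(0,\gamma_n^{(2)})$; a trivial translation by $\tfrac12$ then promotes the statement to the original system \re{eqn:pACOK}. Condition~(I) is automatic because $\phi\equiv 0$ is always a solution of \re{eqn:pACOK2}, so the real work is to extract the correct linear spectrum from $D_\phi\mathcal{G}(0,\gamma)$ in \re{case1}.

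The computational heart is the identity \re{case1:eq2}, which I would derive by substituting $\xi(x)=\cos(n\pi x)$ into \re{case1}. Two observations isolate the relevant eigenvalue. First, for $n\ge 1$ we have $\int_{-1}^1\cos(n\pi x)\dif x=0$, so the $H(x)$ term in \re{case1} vanishes identically. Second, the convolution $\int_{-1}^1 G(x,y)\cos(n\pi y)\dif y=\tfrac12\int_{-1}^1|x-y|\cos(n\pi y)\dif y$ is computed by splitting at $y=x$ and integrating by parts twice; the boundary contributions all carry the factor $\sin(\pm n\pi)=0$, and the surviving piece is a scalar multiple of $\cos(n\pi x)$. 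Combining this with the double integral over $\Omega\times\Omega$ and the local contribution $\epsilon\Delta\cos(n\pi x)+\tfrac{18}{\epsilon}\cos(n\pi x)$ yields the eigenrelation asserted in \re{case1:eq2}, sending $\cos(n\pi x)$ to $\bigl[-\epsilon(n\pi)^2+\tfrac{18}{\epsilon}-\tfrac{\gamma}{(n\pi)^2}\bigr]\cos(n\pi x)$.

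Setting the bracket equal to zero solves for $\gamma=\gamma_n^{(2)}=-\epsilon(n\pi)^4+\tfrac{18}{\epsilon}(n\pi)^2$. At this value of $\gamma$, the orthogonality of $\{1,\cos(k\pi x)\}_{k\ge 1}$ together with the sine family governed by \re{case1:eq1}, whose multipliers remain nonzero for generic $\epsilon$, delivers Conditions~(II) and~(III): the kernel is the one-dimensional span of $\cos(n\pi x)$ and the image has codimension one in $Y$. Condition~(IV) is verified by differentiating \re{ACOK:freD0} in $\gamma$ and applying to $\cos(n\pi x)$:
\begin{equation*}
D_{\gamma\phi}\mathcal{G}\bigl(0,\gamma_n^{(2)}\bigr)[\cos(n\pi x)]=-\frac{1}{(n\pi)^2}\cos(n\pi x),
\end{equation*}
a nonzero multiple of the kernel generator and therefore outside the image. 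Crandall--Rabinowitz then supplies a branch $(\phi_n(x,s),\gamma_n(s))$ with $\phi_n(x,s)=s\cos(n\pi x)+o(s)$ in the shifted system, and un-shifting by $\tfrac12$ yields the claimed $\phi_n(x,s)=\tfrac12+s\cos(n\pi x)+o(s)$ for \re{eqn:pACOK}.

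The main obstacle is bookkeeping rather than conceptual: I need to carry all boundary terms through the double integration by parts so that the nonlocal contribution collapses cleanly onto the $\cos(n\pi x)$ direction, and to exclude accidental resonances at $\gamma=\gamma_n^{(2)}$ with either the sine family $\sin\bigl((\tfrac{\pi}{2}+k\pi)x\bigr)$ or with cosine modes of index $k\ne n$, since any such resonance would enlarge the kernel and violate Condition~(II). These non-degeneracies hold for generic $\epsilon$ and can be imposed as standing assumptions; once they are in place the four Crandall--Rabinowitz conditions hold and the theorem follows immediately from Theorem~\ref{CRthm}.
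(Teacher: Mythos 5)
Your plan matches the paper's own (only sketched) proof exactly: the paper obtains Theorem~\ref{ACOK:thm3} by rerunning the proof of Theorem~\ref{ACOK:thm1} with the cosine ansatz \re{sol2} in place of \re{sol1}, arriving at \re{case1:eq2} and invoking Crandall--Rabinowitz, which is precisely what you propose. One computational caveat worth recording: unlike the sine case, $\int_{-1}^1|x-y|\cos(n\pi y)\dif y=-\tfrac{2}{(n\pi)^2}\cos(n\pi x)+\tfrac{2(-1)^n}{(n\pi)^2}$ carries a nonzero constant, so the nonlocal term does not collapse onto the $\cos(n\pi x)$ direction by itself but only after combining with the mean-subtraction double integral and using that constants already lie in $\mathrm{Im}\,D_\phi\mathcal{G}(0,\gamma)$ by \re{a0}; your explicit non-resonance check of $\gamma_n^{(2)}$ against the sine family (which would otherwise enlarge the kernel and break Condition~(II)) is a genuine requirement, valid for generic $\epsilon$, that the paper leaves implicit.
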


\subsubsection{No bifurcations around $\phi_0=0,1$}
When $\phi_0=0$ or $1$, we have $W''(\phi_0) = 36$, and \re{ACOK:freD0} becomes
\begin{equation*}
    \begin{split}
    D_\phi \mathcal{G}(0,\gamma)[\xi] =&\; \epsilon \Delta \xi -\frac{36}{\epsilon}\xi - \frac{\gamma H(x)}{|\Omega|}\int_\Omega \xi \dif x + \gamma \int_\Omega G(x,y)\xi(y)\dif y \\
    &- \frac{2\gamma}{|\Omega|}\int_\Omega\int_\Omega G(x,y)\xi(y)\dif y\dif x.
    \end{split}
\end{equation*}
Substituting solution \re{sol1}, it follows that
\begin{equation*}
    D_\phi \mathcal{G}(0,\gamma)[\xi] = -\frac{36 a_0}{\epsilon} + \sum_{n=0}^\infty\Big[-\epsilon(\frac{\pi}{2}+n\pi)^2 -\frac{36}{\epsilon} -\frac{\gamma}{(\frac{\pi}{2}+n\pi)^2}\Big]b_n\sin\left(\left(\frac{\pi}{2}+n\pi\right)x \right).
\end{equation*}
Notice that the coefficient for each $\sin\big((\frac{\pi}{2}+n\pi)x \big)$ term,  $\Big[-\epsilon(\frac{\pi}{2}+n\pi)^2 -\frac{36}{\epsilon} -\frac{\gamma}{(\frac{\pi}{2}+n\pi)^2}\Big]$, is always negative when $\epsilon,\gamma>0$, hence $\sin\big((\frac{\pi}{2}+n\pi)x \big)$ cannot be varnished. Similarly, if we use the solution expression \re{sol2}, the coefficient for each $\cos(n\pi x)$ term is also negative. Hence $\text{Ker}\,D_\phi \mathcal{G}(0,\gamma) = \emptyset$ for any $\gamma>0$, which does not meet the second condition of Crandall-Rabinowitz Theorem. Therefore, there are no bifurcation solutions around $\phi=0$ or $\phi=1$.

\subsection{Bifurcation diagram}
First, we compute the bifurcation points of $\gamma$ for different $\epsilon$ in Fig. \ref{figure:Gamma_epsilon}. It shows that more bifurcation points when $\epsilon$ is smaller. 
Next, we compute the bifurcation diagram of \re{eqn4.1} in Fig. \ref{figure:SS} with respect to $\gamma$ for $\epsilon=0.3$. It shows more complex structure than Allen-Cahn equation. For any given $\gamma$, we can also compute the multiple solutions based on the bifurcation diagram. For instance, when $\gamma=1000$ and $\epsilon=0.3$, we have 12 solutions shown in Fig. \ref{figure:sols}.

\begin{figure}
  \includegraphics[width=3.2in]{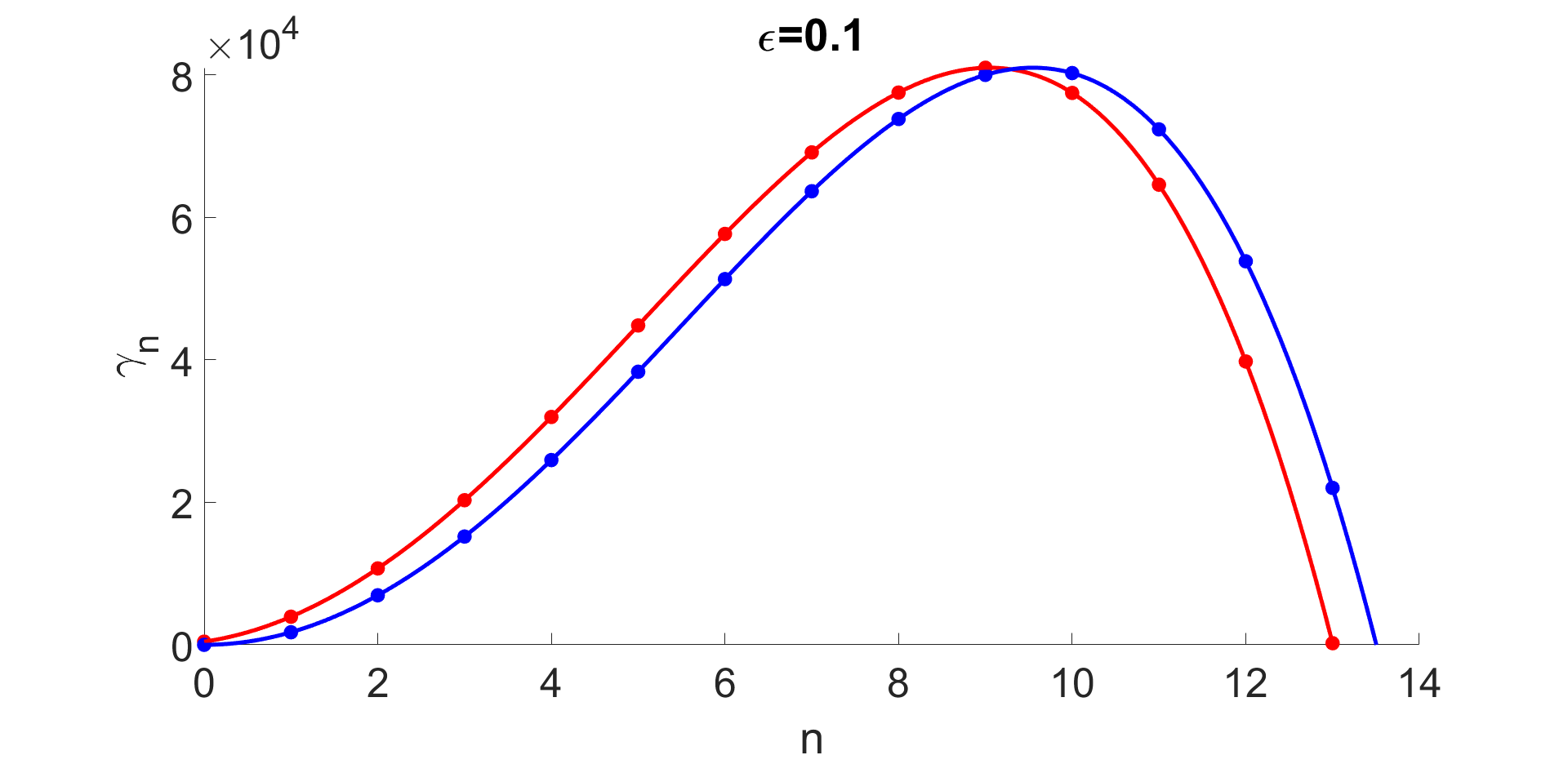}
    \includegraphics[width=3.2in]{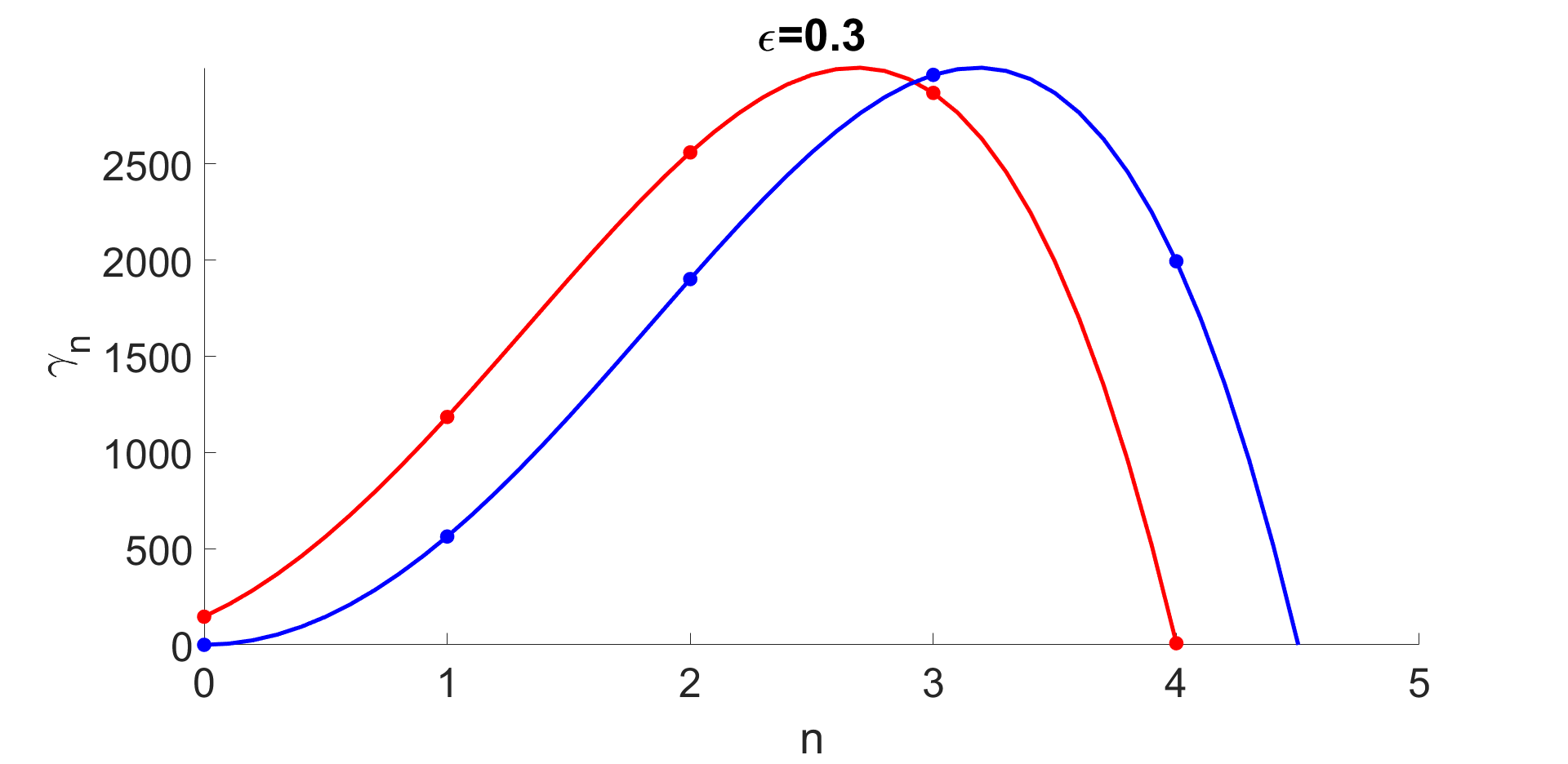}
  \caption{The bifurcation points $\gamma_n$ for different mode $n$ with $\epsilon=0.3$ (left) and $\epsilon=0.1$ (right). The red curve is corresponding to $\sin$ perturbation while the blue curve is for $\cos$ perturbation.}\label{figure:Gamma_epsilon}
\end{figure}

\begin{figure}
  \includegraphics[width=6in]{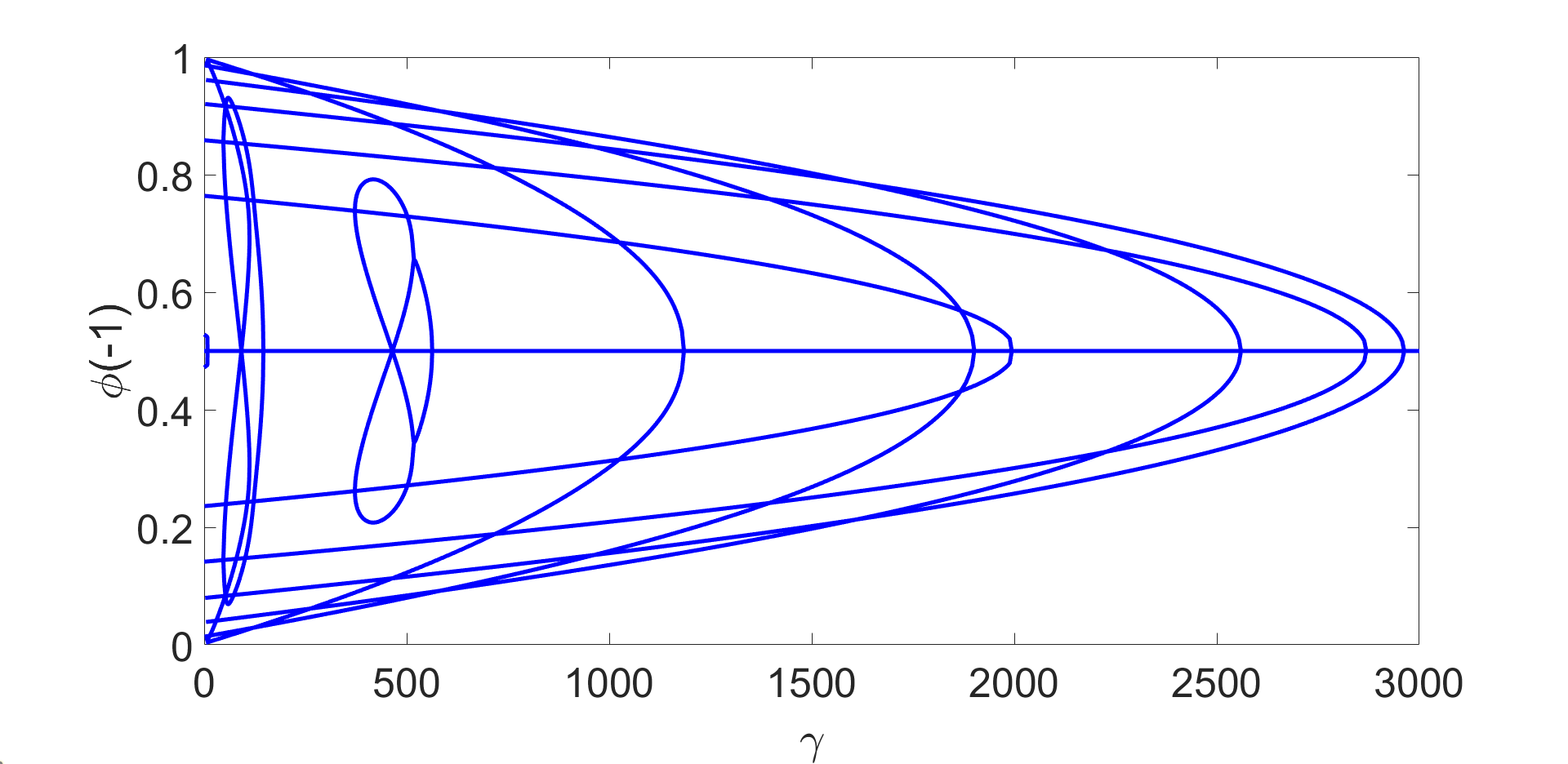}
  \caption{The solution structure of the ACOK model with respect to $\gamma$ with $\epsilon=0.3$. Here y-axis labels $\phi(-1)$. 
  $\epsilon=0.1$.}\label{figure:SS}
\end{figure}

\begin{figure}
  \includegraphics[width=6in]{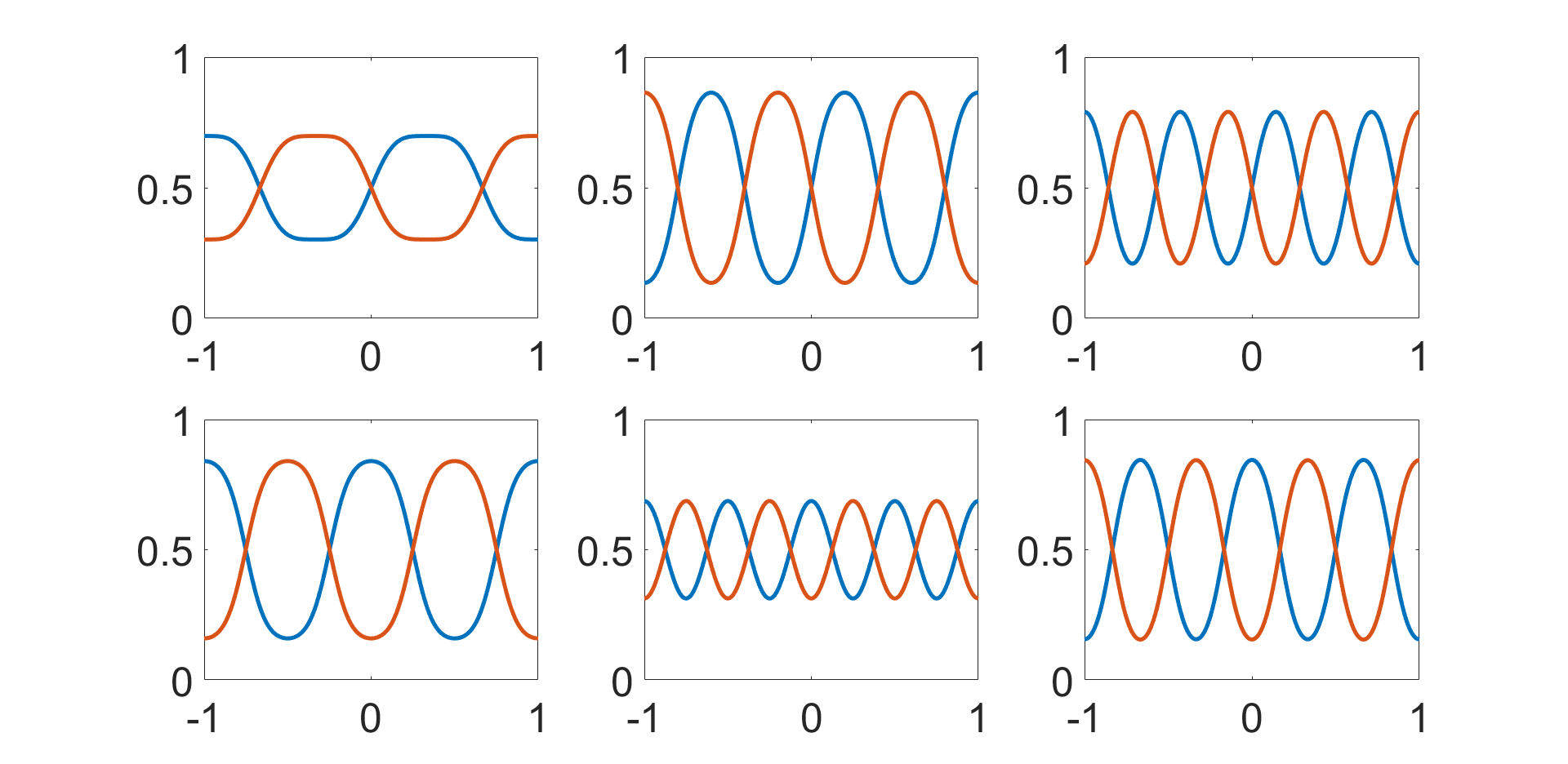}
  \caption{Multiple non-trivial solutions of ACOK model with $\gamma=1000$ and $\epsilon=0.3$. There are 12 solutions,  each panel consisting two symmetric solutions with respect to $y=\frac{1}{2}$.
  }\label{figure:sols}
\end{figure}

\section{Discussions and Conclusions}
We develop an analytical framework based on bifurcation analysis to explore the solution structure of phase field equations. It is applied to three well-known phase field equations, Allen-Cahn equation, Cahn-Hillard equation, and Allen-Chan-Ohta-Kawasaki system. Our results show that all the solutions bifurcate from the unstable trivial solution branch. Theoretical bifurcation analysis and numerical computation are presented to systematically validate the solution structures.

Note that the bifurcation analysis near $\phi_0 = 0$ leads to the solution structures for non-trivial equilibrium solutions. This analytical approach can be applied to other complex systems as long as they have a trivial solution or an analytical non-trivial solution. This has been successfully demonstrated in analyzing complex free boundary problems in tumor growth \cite{friedman2001symmetry,friedman2007bifurcation,hao2012bifurcation,zhao2020symmetry} and plaque formation \cite{hao2020bifurcation,zhao2021bifurcation}, but there are some limitations for current analytical approach. For example, it cannot be applied directly to cell migration on micro-patterns \cite{Zhao_PRE2017} in which cell is represented by a phase field function and winds up with a (time-dependent) periodic circular motion.

Besides, bifurcation analysis could have other impacts except providing the solution structure. For instance, we can use bifurcation results to obtain the stability condition of various numerical schemes. Consider the implicit scheme of Allen-Cahn equation as an example:
\begin{equation}%\label{AC_FD}
\frac{\phi^{n+1}-\phi^n}{\Delta t}-\Delta \phi^{n+1} + \frac{1}{\epsilon} ((\phi^{n+1})^3 - \phi^{n+1}) = 0, \hspace{2em} 0\leq i\leq N.
\end{equation}
Let $\phi^{n+1} = \phi^n +\psi$, then the linearized system reads 
\begin{equation}
    \frac{\psi}{\Delta t} - \psi_{xx} + \frac{1}{\epsilon^2}\big(3(\phi^n)^2 \psi -\psi \big) = \phi^n_{xx} - \frac{1}{\epsilon^2}\big((\phi^n)^3 - \phi^n \big)\label{eq2}.
\end{equation}
If we choose $\phi^n=0$, Eq. (\ref{eq2}) becomes
\begin{equation}
\frac{\psi}{\Delta t}-\psi_{xx}- \frac{1}{\varepsilon^2}\psi =0,
\end{equation}
which admits bifurcations, based on our results, when $\frac{1}{\Delta t}- \frac{1}{\epsilon^2} < 0$, i.e. $\Delta t > \epsilon^2$. Therefore, to ensure the uniqueness of $\phi^{n+1}$, the stability condition of the implicit scheme is  $\Delta t < \epsilon^2$. We will perform systematic study on the stability conditions for various numerical schemes in the future.

The bifurcation analysis we propose in this work can be naturally extended to the 2D case, in which the solutions are still branched out of the unstable constant solution $\phi = 0$. Only it has more complicated formulations and more time-consuming numerical computation than the 1D case. When applying to 2D case, an interesting and more complicated example of phase field models is the one with dynamic boundary conditions such as GMS model \cite{goldstein2011cahn} and LW model \cite{liu2019energetic,knopf2021phase}. For these models, the phase field function $\phi$ still satisfies the usual dynamics over the bulk domain $\Omega$ such as Cahn-Hilliard dynamics, coupled with standard boundary conditions such as Neumann boundary condition. However, $\phi|_{\partial\Omega}$ undergoes another phase field dynamics such as Allen-Cahn, or Cahn-Hilliard, or Allen-Cahn-Ohta-Kawasaki type. Our bifurcation framework is applicable to this type of model as $\phi = 0$ is still the solution branch containing bifurcation branches to other solutions. To this end, we need to design stable and efficient numerical discretization schemes for the phase field models with dynamic boundary conditions, though [P. Knoph et al.2021] has introduced some structure-preserving numerical schemes for solving GMS model and LW model. We will leave the bifurcation analysis together with stable and efficient solvers for phase field models with dynamic boundary conditions for the future consideration.

Current work for pACOK system is limited to the case in which phases or species $A$ and $B$ are of equal fraction. Some solution structures for a more general case of $\omega\ll 1$ can be explored based on the bifurcation from the nontrivial solution branch for the case equal fractions. What is more, this framework can be applied to analyze the solutions structures of other phase field equations, e.g.,  nonlocal Allen-Cahn equation, Allen-Cahn-Ohta-Kawazaki equation with a general nonlocal long-range interaction, and minimal phase field models for cell migration.

%This work can be extended along several directions. First of all, the proposed bifurcation analysis framework can be extended to two-dimensional (2D) and three-dimensional (3D) cases. Secondly, the current work for pACOK system is limited to the case in which phases or species $A$ and $B$ are of equal fraction. Some solution structures for a more general case of $\omega\ll 1$ can be explored based on the bifurcation from the nontrivial solution branch for the case equal fractions. Thirdly, this framework can be applied to analyze the solutions structures of other phase field equations, e.g.,  nonlocal Allen-Cahn equation, Allen-Cahn-Ohta-Kawazaki equation with a general nonlocal long-range interaction, and minimal phase field models for cell migration.

\section{Acknowledgement}
%W. Hao's research is supported by NSF via DMS-2052685.
The work is primarily supported as part of the Computational Materials Sciences Program funded by the U.S. Department of Energy, Office of Science, Basic Energy Sciences, under Award No. DE-SC0020145. Y.Z. would like to acknowledge support for his effort by the Simons Foundation through Grant No. 357963 and NSF grant DMS-2142500.

\section{Conflict of interest}
On behalf of all authors, the corresponding author states that there is no conflict of interest.

\bibliography{ref}

\begin{thebibliography}{10}

\bibitem{Akrivis_MCAMS1998}
G.~Akrivis, M.~Crouzeix, and C.~Makridakis.
\newblock Implicit-explicit multistep finite element methods for nonlinear
  parabolic problems.
\newblock {\em Mathematics of Computation of the American Mathematical
  Society}, 67:457, 1998.

\bibitem{AndersonMcFaddenWheeler_ARFM1998}
D.~M. Anderson, G.~B. McFadden, and A.~A. Wheeler.
\newblock Diffuse-interface methods in fluid mechanics.
\newblock {\em Annu. Rev. Fluid Mech.}, 30:139, 1998.

\bibitem{Bates_PhysToday1999}
F.~S. Bats and G.~H. Fredrickson.
\newblock Block copolymers - designer soft materials.
\newblock {\em Phys. Today}, 52(2):32, 1999.

\bibitem{BKHR}
L.~Bauer, H.~B Keller, and E.~Reiss.
\newblock Multiple eigenvalues lead to secondary bifurcation.
\newblock {\em Siam Review}, 17(1):101--122, 1975.

\bibitem{Benesova_SINA2014}
B.~Benesova, C.~Melcher, and E.~Suli.
\newblock An implicit midpoint spectral approximation of nonlocal
  {C}ahn-{H}illiard equations.
\newblock {\em SIAM J. Numer. Anal.}, 52:1466, 2014.

\bibitem{Borden_CMAME2012}
M.~J. Borden, C.~V. Verhoosel, M.~A. Scott, T.~J. Hughes, and C.~M. Landis.
\newblock A phase-field description of dynamic brittle fracture.
\newblock {\em Comput. Methods Appl. Mech. Eng.}, 217:77, 2012.

\bibitem{BrowerKesslerKoplikLevine_PRA1984}
R.~Brower, D.~Kessler, J.~Koplik, and H.~Levine.
\newblock Geometrical models of interface evolution.
\newblock {\em Phy. Rev. A}, 29:1335, 1984.

\bibitem{AllenCahn_JP1977}
J.~Cahn and S.~Allen.
\newblock A microscopic theory for domain wall motion and its experimental
  verification in {F}e-{A}l alloy domain growth kinetics.
\newblock {\em J. de Physique Colloques}, 38:C7--51, 1977.

\bibitem{CahnHilliard_JCP1958}
J.~Cahn and J.~Hilliard.
\newblock Free energy of a nonuniform system. i. interfacial free energy.
\newblock {\em J. Chem. Phys.}, 28:258, 1958.

\bibitem{Cahn_1961}
J.W. Cahn.
\newblock On spinodal decomposition.
\newblock {\em Acta Metallurgica}, 9(9):795--801, 1961.

\bibitem{Zhao_PRE2017}
B.~Camley, Y.~Zhao, B.~Li, H.~Levine, and W.-J. Rappel.
\newblock Crawling and turning in a minimal reaction-diffusion cell motility
  model: coupling cell shape and biochemistry.
\newblock {\em Phys. Rev. E}, 95:012401, 2017.

\bibitem{ChanNejadWei_PhysicaD2019}
H.~Chan, M.~Nejad, and J.~Wei.
\newblock Lamellar phase solutions for diblock copolymers with nonlocal
  diffusions.
\newblock {\em Physica D: Nonlinear Phenomena}, 388:22--32, 2019.

\bibitem{ChenShen_CPC1998}
L.~Q. Chen and J.~Shen.
\newblock Applications of semi-implicit {F}ourier-spectral method to phase
  field equations.
\newblock {\em Comput. Phys. Commun.}, 108:147, 1998.

\bibitem{Chen_Zhao_2022}
L.~Q. Chen and Y.~H. Zhao.
\newblock From classical thermodynamics to phase-field method.
\newblock {\em Progress in Materials Science}, 124:10086, 2022.

\bibitem{Chen_ARMR2002}
L.Q. Chen.
\newblock Phase-field models for microstructure evolution.
\newblock {\em Annu. Rev. Mater. Res.}, 32:113, 2002.

\bibitem{ChengYangShen_JCP2017}
W.~Cheng, X.~Yang, and J.~Shen.
\newblock Efficient and accurate numerical schemes for a hydro-dynamically
  coupled phase field diblock copolymer model.
\newblock {\em J. Comput. Phys.}, 341:44, 2017.

\bibitem{ChoiZhao_DCDSB2021}
H.~Choi and Y.~Zhao.
\newblock Second-order stabilized semi-implicit energy stable schemes for
  bubble assemblies in binary and ternary systems.
\newblock {\em DCDS-B}, 2021.

\bibitem{Choksi_NonlinearScience2001}
R.~Choksi.
\newblock Scaling laws in microphase separation of diblock copolymers.
\newblock {\em J. Nonlinear Sci.}, 11:223--236, 2011.

\bibitem{Choksi_QAM2012}
R.~Choksi.
\newblock On global minimizers for a variational problem with long-range
  interactions.
\newblock {\em Quart. Appl. Math.}, 70:517--537, 2012.

\bibitem{CoxMatthews_JCP2002}
S.~M. Cox and P.~C. Matthews.
\newblock Exponential time differencing for stiff systems.
\newblock {\em J. Comput. Phys.}, 176:430, 2002.

\bibitem{crandall1971bifurcation}
Michael~G Crandall and Paul~H Rabinowitz.
\newblock Bifurcation from simple eigenvalues.
\newblock {\em Journal of Functional Analysis}, 8(2):321--340, 1971.

\bibitem{vanderWaals_1893}
J.D.V. der Waals.
\newblock Theorie thermodynamique de la capillarite, dans l'hypothese d'une
  variation continue de la densite.
\newblock {\em Archives Neerlandaises des sciences exactes et naturelles},
  XXVIII:121--209, 1979.

\bibitem{DuFeng_HNA2020}
Q.~Du and X.~Feng.
\newblock The phase field method for geometric moving interfaces and their
  numerical approximations.
\newblock {\em Handbook of Numerical Analysis}, 21:425, 2020.

\bibitem{DuJuLiQiao_SIAMReview2021}
Q.~Du, L.~Ju, X.~Li, and Z.~Qiao.
\newblock Maximum bound principles for a class of semilinear parabolic
  equations and exponential time-differencing schemes.
\newblock {\em SIAM Review}, 63:317, 2021.

\bibitem{DuNicolaides_SINU1991}
Q.~Du and R.~A. Nicolaides.
\newblock Numerical analysis of a continuum model of phase transition.
\newblock {\em SIAM J. Numer. Anal.}, 28:1310, 1991.

\bibitem{Eyre_MRS1998}
D.~J. Eyre.
\newblock Unconditionally gradient stable time marching the {C}ahn-{H}illiard
  equation.
\newblock {\em MRS Online Proceedings Library Archive}, 529, 1998.

\bibitem{Fix_1983}
G.J. Fix.
\newblock {\em Phase field problems for free boundary problems}.
\newblock Free Boundary Problems: Theory and Applications. Pitman, 1983.

\bibitem{FHB}
A.~Friedman and B.~Hu.
\newblock Bifurcation from stability to instability for a free boundary problem
  arising in a tumor model.
\newblock {\em Archive for rational mechanics and analysis}, 180(2):293--330,
  2006.

\bibitem{FHB1}
A.~Friedman and B.~Hu.
\newblock Bifurcation for a free boundary problem modeling tumor growth by
  stokes equation.
\newblock {\em SIAM Journal on Mathematical Analysis}, 39(1):174--194, 2007.

\bibitem{friedman2007bifurcation}
A.~Friedman and B.~Hu.
\newblock Bifurcation for a free boundary problem modeling tumor growth by
  stokes equation.
\newblock {\em SIAM Journal on Mathematical Analysis}, 39(1):174--194, 2007.

\bibitem{FRF1}
A.~Friedman and F.~Reitich.
\newblock Symmetry-breaking bifurcation of analytic solutions to free boundary
  problems: an application to a model of tumor growth.
\newblock {\em Transactions of the American Mathematical Society},
  353(4):1587--1634, 2001.

\bibitem{friedman2001symmetry}
A.~Friedman and F.~Reitich.
\newblock Symmetry-breaking bifurcation of analytic solutions to free boundary
  problems: an application to a model of tumor growth.
\newblock {\em Transactions of the American Mathematical Society},
  353(4):1587--1634, 2001.

\bibitem{GennipPeletier_CVPDE2008}
Y.~Gennip and M.~Peletier.
\newblock Copolymer-homopolymer blends: global energy minimisation and global
  energy bounds.
\newblock {\em Calc. Var. Partial Differ. Equ.}, 33:75--111, 2008.

\bibitem{Ginzburg_Landau_JETP1950}
V.~Q. Ginzburg and L.~E. Landau.
\newblock On the theory of superconductivity.
\newblock {\em Soviet Physics — JETP}, 20(12):1064--1082, 1950.

\bibitem{goldstein2011cahn}
G.R. Goldstein, A.~Miranville, and G.~Schimperna.
\newblock A cahn--hilliard model in a domain with non-permeable walls.
\newblock {\em Physica D: Nonlinear Phenomena}, 240(8):754--766, 2011.

\bibitem{haber1}
R.~Haber and H.~Unbehauen.
\newblock Structure identification of nonlinear dynamic systems survey on
  input/output approaches.
\newblock {\em Automatica}, 26(4):651--677, 1990.

\bibitem{hao2021adaptive}
W.~Hao.
\newblock An adaptive homotopy tracking algorithm for solving nonlinear
  parametric systems with applications in nonlinear odes.
\newblock {\em Applied Mathematics Letters}, page 107767, 2021.

\bibitem{HCF}
W.~Hao, E.~Crouser, and A.~Friedman.
\newblock Mathematical model of sarcoidosis.
\newblock {\em Proceedings of the National Academy of Sciences},
  111(45):16065--16070, 2014.

\bibitem{HF}
W.~Hao and A.~Friedman.
\newblock The ldl-hdl profile determines the risk of atherosclerosis: a
  mathematical model.
\newblock {\em PloS one}, 9(3):e90497, 2014.

\bibitem{HHHLSZ}
W.~Hao, J.~Hauenstein, B.~Hu, Y.~Liu, A.~Sommese, and Y.-T. Zhang.
\newblock Multiple stable steady states of a reaction-diffusion model on
  zebrafish dorsal-ventral patterning.
\newblock {\em Discrete and Continuous Dynamical Systems-Series S},
  4(6):1413--1428, 2011.

\bibitem{HHHS}
W.~Hao, J.~Hauenstein, B.~Hu, and A.~Sommese.
\newblock A three-dimensional steady-state tumor system.
\newblock {\em Applied Mathematics and Computation}, 218(6):2661--2669, 2011.

\bibitem{HHSSXZ}
W.~Hao, J.~Hauenstein, C.-W. Shu, A.~Sommese, Z.~Xu, and Y.-T. Zhang.
\newblock A homotopy method based on weno schemes for solving steady state
  problems of hyperbolic conservation laws.
\newblock {\em Journal of Computational Physics}, 250:332--346, 2013.

\bibitem{hao2012bifurcation}
W.~Hao, J.~D Hauenstein, B.~Hu, Y.~Liu, A.~J Sommese, and Y.-T. Zhang.
\newblock Bifurcation for a free boundary problem modeling the growth of a
  tumor with a necrotic core.
\newblock {\em Nonlinear Analysis: Real World Applications}, 13(2):694--709,
  2012.

\bibitem{HNS}
W.~Hao, R.~Nepomechie, and A.~Sommese.
\newblock Completeness of solutions of bethe's equations.
\newblock {\em Physical Review E}, 88(5):052113, 2013.

\bibitem{HNS1}
W.~Hao, R.~Nepomechie, and A.~Sommese.
\newblock Singular solutions, repeated roots and completeness for higher-spin
  chains.
\newblock {\em Journal of Statistical Mechanics: Theory and Experiment},
  2014(3):P03024, 2014.

\bibitem{hao2020spatial}
W.~Hao and C.~Xue.
\newblock Spatial pattern formation in reaction--diffusion models: a
  computational approach.
\newblock {\em Journal of Mathematical Biology}, pages 1--23, 2020.

\bibitem{hao2020adaptive}
W.~Hao and C.~Zheng.
\newblock An adaptive homotopy method for computing bifurcations of nonlinear
  parametric systems.
\newblock {\em Journal of Scientific Computing}, 82(3):1--19, 2020.

\bibitem{hao2020bifurcation}
W.~Hao and C.~Zheng.
\newblock Bifurcation analysis of a free boundary model of the atherosclerotic
  plaque formation associated with the cholesterol ratio.
\newblock {\em Chaos: An Interdisciplinary Journal of Nonlinear Science},
  30(9):093113, 2020.

\bibitem{hao2021stochastic}
Wenrui Hao and Chunyue Zheng.
\newblock A stochastic homotopy tracking algorithm for parametric systems of
  nonlinear equations.
\newblock {\em Journal of Scientific Computing}, 87(3):1--14, 2021.

\bibitem{JooXuZhao_IFB2021}
S.~Joo, X.~Xu, and Y.~Zhao.
\newblock Analysis and computation for {A}llen-{C}ahn-{O}hta-{N}akazawa model
  in ternary system.
\newblock {\em Interfaces Free Bound.}, 23:535--559, 2021.

\bibitem{KesslerKoplikLevine_PRA1984}
D.~Kessler, J.~Koplik, and H.~Levine.
\newblock Geometrical models of interface evolution. ii.
\newblock {\em Phy. Rev. A}, 30:3161, 1984.

\bibitem{KesslerKoplikLevine_PRA1985}
D.~Kessler, J.~Koplik, and H.~Levine.
\newblock Geometrical models of interface evolution. iii.
\newblock {\em Phy. Rev. A}, 31:1712, 1985.

\bibitem{knopf2021phase}
P.~Knopf, K.F. Lam, C.~Liu, and S.~Metzger.
\newblock Phase-field dynamics with transfer of materials: The cahn--hilliard
  equation with reaction rate dependent dynamic boundary conditions.
\newblock {\em ESAIM: Mathematical Modelling and Numerical Analysis},
  55(1):229--282, 2021.

\bibitem{LiuShen_PhysicaD}
C.~Liu and J.~Shen.
\newblock A phase field model for the mixture of two incompressible fluids and
  its approximation by a fourier-spectral method.
\newblock {\em Physica D}, 179:211, 2003.

\bibitem{liu2019energetic}
C.~Liu and H.~Wu.
\newblock An energetic variational approach for the cahn--hilliard equation
  with dynamic boundary condition: model derivation and mathematical analysis.
\newblock {\em Archive for Rational Mechanics and Analysis}, 233(1):167--247,
  2019.

\bibitem{MorganSommese1}
A.~Morgan and A.~Sommese.
\newblock Computing all solutions to polynomial systems using homotopy
  continuation.
\newblock {\em Applied Mathematics and Computation}, 24(2):115--138, 1987.

\bibitem{OhtaNakazawa_Macromolecules1993}
H.~Nakazawa and T.~Ohta.
\newblock Microphase separation of {ABC}-type triblock copolymers.
\newblock {\em Macromolecules}, 26:5503–5511, 1993.

\bibitem{OhtaKawasaki_Macromolecules1986}
T.~Ohta and K.~Kawasaki.
\newblock Equilibrium morphology of block copolymer melts.
\newblock {\em Macromolecules}, 19:2621--2632, 1986.

\bibitem{RenTruskinovsky_Elasticity2000}
X.~Ren and L.~Truskinovsky.
\newblock Finite scale microstructures in nonlocal elasticity.
\newblock {\em J. Elasticity}, 59:319--355, 2000.

\bibitem{RenWei_SIAM2000}
X.~Ren and J.~Wei.
\newblock On the multiplicity of solutions of two nonlocal variational
  problems.
\newblock {\em SIAM J. Math. Ana.}, 4:909--924, 2000.

\bibitem{RenWei_JNS2003}
X.~Ren and J.~Wei.
\newblock Triblock copolymer theory: {O}rdered {ABC} lamellar phase.
\newblock {\em J. Nonlinear Sci.}, 13:175--208, 2003.

\bibitem{RenWei_ARMA2013}
X.~Ren and J.~Wei.
\newblock A double bubble in a ternary system with inhibitory long range
  interaction.
\newblock {\em Arch. Ration. Mech. Anal.}, 208:201--253, 2013.

\bibitem{RenWei_ARMA2015}
X.~Ren and J.~Wei.
\newblock A double bubble assembly as a new phase of a ternary inhibitory
  system.
\newblock {\em Arch. Ration. Mech. Anal.}, 215:967--1034, 2015.

\bibitem{Rhein1}
W.~Rheinboldt.
\newblock Numerical methods for a class of finite dimensional bifurcation
  problems.
\newblock {\em SIAM Journal on Numerical Analysis}, 15(1):1--11, 1978.

\bibitem{Rhein2}
W.~Rheinboldt.
\newblock Numerical analysis of continuation methods for nonlinear structural
  problems.
\newblock {\em Computers \& Structures}, 13(1):103--113, 1981.

\bibitem{Rowlinson_1979}
J.S. Rowlinson.
\newblock Translation of vanderwaals,jd the thermodynamic theory of capillarity
  under the hypothesis of a continuous variation of density.
\newblock {\em Journal of Statistical Physics}, 20(2):197--244, 1979.

\bibitem{ShenXuYang_JCP2018}
J.~Shen, J.~Xu, and J.~Yang.
\newblock The scalar auxiliary variable (sav) approach for gradient flows.
\newblock {\em J. Comput. Phys.}, 353:407, 2018.

\bibitem{SongShu_ISC2017}
H.~Song and C.-W. Shu.
\newblock Unconditional energy stability analysis of a second order
  implicit–explicit local discontinuous {G}alerkin method for the
  {C}ahn–{H}illiard equation.
\newblock {\em J. Sci. Comput.}, 73:1178, 2017.

\bibitem{Steinbach_2013}
I.~Steinbach.
\newblock Phase-field model for microstructure evolution at the mesoscopic
  scale.
\newblock {\em Annual Review of Materials Research}, 43:89--107, 2013.

\bibitem{Stephen_Suhl_1964}
M.~J. Stephen and H.~Suhl.
\newblock Weak time dependence in pure superconductors.
\newblock {\em Phys. Rev. Lett.}, 13(26):797--800, 1964.

\bibitem{WangRenZhao_CMS2019}
C.~Wang, X.~Ren, and Y.~Zhao.
\newblock Bubble assemblies in ternary systems with long range interaction.
\newblock {\em Comm. Math. Sci.}, 17:2309--2324, 2019.

\bibitem{Boettinger_Warren_Beckerman_Karma_2002}
C.~Beckermann W.J.~Boettinger, J.A.~Warren and A.~Karma.
\newblock Phase-field simulation of solidification.
\newblock {\em Annual Review of Materials Research}, 32:163--194, 2002.

\bibitem{Xu_CMAME2019}
J.~Xu, Y.~Li, S.~Wu, and A.~Bousquetd.
\newblock On the stability and accuracy of partially and fully implicit schemes
  for phase field modeling.
\newblock {\em Comput. Methods Appl. Mech. Eng.}, 345:826, 2019.

\bibitem{XuZhao_JSC2019}
X.~Xu and Y.~Zhao.
\newblock Energy stable semi-implicit schemes for
  {A}llen-{C}ahn-{O}hta-{K}awasaki model in binary system.
\newblock {\em J. Sci. Comput.}, 80:1656--1680, 2019.

\bibitem{XuZhao_JSC2020}
X.~Xu and Y.~Zhao.
\newblock Maximum principle preserving schemes for binary systems with
  long-range interactions.
\newblock {\em J. Sci. Comput.}, 84:33, 2020.

\bibitem{XuDu_JNA2021}
Z.~Xu and Q.~Du.
\newblock On the ternary ohta-kawasaki free energy and its one dimensional
  global minimizers.
\newblock {\em arXiv:2111.09877}, 2021.

\bibitem{Yang_JCP2016}
X.~Yang.
\newblock Linear, first and second-order, unconditionally energy stable
  numerical schemes for the phase field model of homopolymer blends.
\newblock {\em J. Comput. Phys.}, 327:294, 2016.

\bibitem{zhao2020symmetry}
Xinyue~Evelyn Zhao and Bei Hu.
\newblock Symmetry-breaking bifurcation for a free-boundary tumor model with
  time delay.
\newblock {\em Journal of Differential Equations}, 269(3):1829--1862, 2020.

\bibitem{zhao2021bifurcation}
Xinyue~Evelyn Zhao and Bei Hu.
\newblock Bifurcation for a free boundary problem modeling a small arterial
  plaque.
\newblock {\em Journal of Differential Equations}, 288:250--287, 2021.

\bibitem{Aranson2016}
Falko Ziebert and Igor~S. Aranson.
\newblock Computational approaches to substrate-based cell motility.
\newblock {\em npj Computational materials}, 2:16019, 2016.

\end{thebibliography}

\end{document}